\newtheorem{cor}{Corollary}[section]
\newtheorem{lem}[cor]{Lemma}
\newtheorem{prop}[cor]{Proposition}
\newtheorem{propo}{Proposition}
\newtheorem{theor}[propo]{Theorem}
\title{\bf Blow-up rate for a semilinear wave equation with exponential nonlinearity in one space dimension}
 \author{\small Asma AZAIEZ\footnote{This author is supported by the ERC Advanced Grant no. 291214, BLOWDISOL.}  \\
\small Universit\'e de Cergy-Pontoise, \\
\small AGM, CNRS (UMR 8088), 95302, Cergy-Pontoise, France.\\
 \small Nader MASMOUDI\footnote{This author is partially supported by NSF grant DMS-
1211806.}\\
\small Courant Institute of Mathematical Sciences.\\
\small Hatem ZAAG\footnote{This author is supported by the ERC Advanced Grant no. 291214, BLOWDISOL. and by ANR project ANA\'E ref. ANR-13-BS01-0010-03.} \\
\small Universit\'e Paris 13, Sorbonne Paris Cit\'e \\
\small LAGA, CNRS (UMR 7539), F-93430, Villetaneuse, France.}
\begin{document}
\maketitle
\begin{abstract} 
 We consider in this paper blow-up solutions of the semilinear wave equation in one space dimension, with an exponential source term. Assuming that initial data are in $H^{1}_{loc}\times L^2_{loc}$ or some times in $ W^{1,\infty}\times L^{\infty}$, we derive the blow-up rate near a non-characteristic point in the smaller space,
  and give some bounds near other points. Our result generalize those proved by Godin under high regularity assumptions on initial data.

 \end{abstract}

\section {Introduction}
We consider the one dimensional semilinear wave equation:
\begin{equation} \left\{
\begin{array}{l}
\displaystyle\partial^2_{t} u = \partial^2_{x} u+e^{u}, \\
u(0)=u_{0} \mbox{ and }  \partial_{t}u(0) = u_{1}
\end{array}
\right . \label{waveq}
\end{equation}
where $u(t):  x\in \mathbb R\to u(x,t) \in \mathbb R , u_0 \in
H^{1}_{loc,u}$ and $ u_1\in L^2_{loc,u}$. We may also add more restriction on initial data by assuming that $(u_0,u_1)\in W^{1,\infty}\times L^{\infty}.$
 The Cauchy problem for equation (\ref{waveq}) in the space $H^{1}_{loc,u}\times L^2_{loc,u}$ follows from fixed point techniques (see Section \ref{cauchy-Problem} below).
\\If the solution is not global in time, we show in this paper that it blows up (see Theorems \ref{th} and \ref{NEW} below). For that reason, we call it a blow-up
solution. The existence of blow-up solutions is guaranteed by ODE techniques and the finite speed of propagation.

\medskip

More blow-up results can be found in Kichenassamy and Littman
 \cite{KL93b}, \cite{KL93a}, where the authors introduce a systematic procedure for
reducing nonlinear wave equations to characteristic problems of
Fuchsian type and construct singular solutions of general semilinear
equations which blow up on a non characteristic surface, provided
that the first term of an expansion of such solutions can be found.

The case of the power nonlinearity has been understood completely in a series of papers, in the real case (in one space dimension) by Merle and Zaag \cite{MR2362418}, \cite{MR2415473}, \cite{MR2931219} and \cite{Mz12} and in C\^ote and Zaag \cite{CZ12} (see also the note \cite{Mz10}), in the complex case by Azaiez \cite{<3}. Some of those results have been extended to higher dimensions for conformal or subconformal $p$:
\begin{equation}\label{conp}
 1<p\le p_c \equiv 1+\frac{4}{N-1},
\end{equation}
under radial symmetry outside the origin in \cite{MR2799813}. For non radial solutions, we would like to mention \cite{MZ05} and \cite{MZ2005} where the blow-up rate was obtained. We also mention the recent contribution of \cite{MZ13} and \cite{MZ13'} where the blow-up behavior is given, together with some stability results.

In \cite{MR849476} and \cite{MR802832}, Caffarelli and Friedman considered semilinear wave equations with a nonlinearity of power type. If the space dimension $N$ is at most $3$, they showed in \cite{MR849476} the existence of solutions of Cauchy problems which blow up on a $C^1$ spacelike hypersurface. If $N = 1$ and under suitable assumptions, they obtained in \cite{MR802832} a very general result which shows that solutions of Cauchy problems either are global or blow up on a $C^1$ spacelike curve. In \cite{MR1854257} and \cite{MR1804655}, Godin shows that the solutions of Cauchy problems either are global or blow up on a $C^1$ spacelike curve for the following mixed problem ($\gamma\neq 1$, $|\gamma|\ge 1$)
\begin{equation} \left\{
\begin{array}{l}
\displaystyle\partial^2_{t} u = \partial^2_{x} u+e^{u},\,x>0 \\
\partial_x u+\gamma\partial_t u=0 \mbox{ if }x=0.
\end{array}
\right .
\end{equation}
In \cite{MR1854257}, Godin gives sharp upper and lower bounds on the blow-up rate for initial data in $C^4\times C^3 $. It happens that his proof can be extended for initial data $(u_0,u_1)\in H^{1}_{loc,u}\times L^2_{loc,u}$ (see Proposition \ref{p} below).

\bigskip

Let us consider \emph{u} a blow-up solution of (\ref{waveq}). Our aim in this paper is to derive upper and lower estimates on the blow-up rate of $u(x,t)$. In particular, we first give general results (see Theorem \ref{th} below), then, considering only non-characteristic points, we give better estimates in Theorem \ref{NEW}.

\medskip
From Alinhac \cite{ali95}, we define a continuous curve $\Gamma$ as the graph of a function ${x \mapsto T(x)}$ such that the domain of definition of $u$ (or the maximal influence domain of $u$) is 
\begin{equation}\label{domaine-de-definition}
D=\{(x,t)|\, 0\le t<T(x)\}.
\end{equation}
From the finite speed of propagation, $T$ is a 1-Lipschitz function. The graph $\Gamma$ is called the blow-up graph of $u$.

Let us introduce the following non-degeneracy condition for $\Gamma$. If we introduce for all $x \in \mathbb{R},$ $t\le T(x)$ and $\delta>0$, the cone
\begin{equation}\label{3,5}
 \mathcal{C}_{x,t, \delta }=\{(\xi,\tau)\neq (x,t)\,| 0\le \tau\le t-\delta |\xi-x|\},
\end{equation}
then our non-degeneracy condition is the following: $x_0$ is a non-characteristic point if
\begin{equation}\label{4}
 \exists \delta_0 = \delta_0 (x_0) \in (0,1) \mbox{ such that } u \mbox{ is defined on }\mathcal{C}_{x_0,T(x_0), \delta_0}.
\end{equation}
If condition (\ref{4}) is not true, then we call $x_0$ a characteristic point. We denote by $\cal{ R} \subset \mathbb{ R}$ (resp. $\cal{ S} \subset \mathbb{ R}$) the set of non-characteristic (resp. characteristic) points.\\

\medskip
We also introduce for each $a\in \mathbb R $ and $T\leq T(a)$
 the following similarity variables:
\begin{equation}\label{trans_auto}
w_{a,T}(y,s) =u(x,t)+2\log(T-t), \quad  y=\frac{x-a}{T-t}, \quad
s=-\log(T-t).\end{equation}
\\If $T=T(a)$, we write $w_a$ instead of $w_{a,T(a)}$.
\\From equation (\ref{waveq}), we see that $w_{a,T}$ (or $w$ for simplicity) satisfies, for all $s
\geq -\log T $, and $y \in (-1,1)$,
\begin{align}\label{equaw}
 {\partial^2_{s} w}-{\partial_y} ((1-y^2){\partial_{y} w})-e^w+2=-  {\partial_s w}-2 y  \partial^2_{y,s}w.
\end{align}
\\In the new set of variables $(y,s)$, deriving the behavior of $u$ as
 $t\to T$ is equivalent to studying the behavior of \emph{w} as s $\to
+\infty$.\\

Our first result gives rough blow-up estimates. Introducing the following set
\begin{align}\label{6.5}
 D_R\equiv\{(x,t)\in (\mathbb R,\mathbb R_+),|x|<R-t\},
\end{align}
where $R>0$, we have the following result
\begin{theor}{\bf (Blow-up estimates near any point)}\label{th} We claim the following:

 \begin{itemize}
\renewcommand{\labelitemi}{}
 \item{i) }\emph{\bf(Upper bound)} For all $R>0$ and $a\in\mathbb{R}$ such that $(a,T(a))\in D_R$, it holds:

\begin{eqnarray*}
\forall |y|<1, \,\forall s\ge -\log T(a),\,w_a(y,s)\le -2 \log(1-|y|)+C(R),
\end{eqnarray*}
\begin{eqnarray*}
\forall t\in[0,T(a)),\,e^{u(a,t)}\le\frac{C(R)}{d((a,t),\Gamma)^2}\le\frac{C(R)}{(T(a)-t)^2}.
\end{eqnarray*}
where $d((x,t),\Gamma)$ is the (Euclidean) distance from $(x,t)$ to $\Gamma$. 
\item{ii) }\emph{\bf(Lower bound)}
For all $R>0$ and $a\in \mathbb{R}$ such that $(a,T(a))\in D_R$, it holds that

$$\frac{1}{T(a)-t}\int_{I(a,t)} e^{-u(x,t)}dx \le C(R)\sqrt{d((a,t),\Gamma)}\le C(R)\sqrt{T(a)-t}.$$

If in addition, $(u_0,u_1)\in W^{1,\infty}\times L^{\infty}$ then
$$\forall t \in [0,T(a)),e^{u(a,t)}\ge \frac{C(R)}{d((a,t),\Gamma)}\ge \frac{C(R)}{T(a)-t}.$$
\item{iii) }\emph{\bf(Lower bound on the local energy "norm")}
There exists $ \epsilon_0>0$ such that for all 
$ a \in \mathbb{R}$, and $t \in [0, T(a)),$
\begin{eqnarray}\label{10,1}
\frac{1}{T(a)-t} &\displaystyle \int_{I(a,t)} ((u_t(x,t))^2+(u_x(x,t))^2+e^{u(x,t)})dx \geq
&\dfrac{\epsilon_0}{(T(a)-t)^2},
\end{eqnarray}
where $I(a,t)=(a-(T(a)-t),a+(T(a)-t)).$
 \end{itemize}
\end{theor}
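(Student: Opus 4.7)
The plan is to work entirely in the self-similar variables (\ref{trans_auto}) and exploit the dissipative structure of (\ref{equaw}).

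\textbf{Lyapunov functional and $H^{1}$ control.} The central tool will be a Lyapunov functional for (\ref{equaw}), in the spirit of what is done for the power nonlinearity. I would look for one of the form
\begin{equation*}
E(w(s)) = \int_{-1}^{1}\Bigl(\tfrac12 (\partial_s w)^2 + \tfrac12 (1-y^2)(\partial_y w)^2 + e^{w} - 2 w\Bigr)\rho(y)\,dy,
\end{equation*}
where $\rho(y)$ is a suitable power of $1-y^2$ chosen so that the cross term $-2y\,\partial^{2}_{s,y}w$ in (\ref{equaw}) can be absorbed after integration by parts, yielding $\tfrac{dE}{ds}\le -c\int_{-1}^{1}(\partial_s w)^{2}\rho\,dy\le 0$. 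Since the potential $e^{w}-2w$ is bounded below, $E$ is coercive, and monotonicity gives a uniform weighted $H^{1}\times L^{2}$ bound on $(w_a,\partial_s w_a)$ together with an $L^{2}_{s,y}$ bound on $\partial_s w_a$. The initial value $E(w_a(-\log T(a)))$ is controlled by $\|u_0\|_{H^{1}_{loc,u}}+\|u_1\|_{L^{2}_{loc,u}}$ uniformly in $a$ with $(a,T(a))\in D_R$.

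\textbf{Upper bound (i).} In one space dimension, Morrey's embedding turns the weighted $H^{1}$ bound on compact subsets of $(-1,1)$ into a pointwise estimate $w_a(0,s)\le C(R)$ for every admissible $a$ and all $s\ge -\log T(a)$. To upgrade this bound at $y=0$ into the weighted estimate $w_a(y,s)\le -2\log(1-|y|)+C(R)$ on $(-1,1)$, I would use the $1$-Lipschitz property of $T$ (a consequence of finite speed of propagation): writing $x=a+y(T(a)-t)$, the identity
\begin{equation*}
w_a(y,s)=w_{x,T(x)}\bigl(0,-\log(T(x)-t)\bigr)+2\log\frac{T(a)-t}{T(x)-t},
\end{equation*}
combined with $T(x)-t\ge (1-|y|)(T(a)-t)$, transfers the uniform bound at $y=0$ (applied at $x$ rather than $a$) into the announced weighted inequality. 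Exponentiating and returning to the original variables yields the bound on $e^{u(a,t)}$, the distance $d((a,t),\Gamma)$ entering through the trivial inequality $d((a,t),\Gamma)\le T(a)-t$.

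\textbf{Lower bounds (ii) and (iii).} For the integral bound in (ii), a change of variables gives
\begin{equation*}
\frac{1}{T(a)-t}\int_{I(a,t)}e^{-u(x,t)}\,dx=(T(a)-t)^{2}\int_{-1}^{1}e^{-w_a(y,s)}\,dy,
\end{equation*}
and I would estimate the right-hand side via a 1D $H^{1}$--$L^{\infty}$ interpolation on $w_a$ (furnished by Step~1) together with the upper bound from (i); the exponent $\tfrac12$ in $\sqrt{T(a)-t}$ is precisely the interpolation gain in one space dimension. The pointwise lower bound $e^{u(a,t)}\ge C(R)/(T(a)-t)$ under $(u_0,u_1)\in W^{1,\infty}\times L^{\infty}$ proceeds by contradiction: if it failed along some $t_n\to T(a)$, finite speed of propagation applied to the Riemann invariants $u_t\pm u_x$ (which are controlled in $L^{\infty}$ on every compact subset strictly inside $D_R$ thanks to the $W^{1,\infty}\times L^{\infty}$ data) would allow one to extend $u$ continuously past $(a,T(a))$, contradicting the definition of $T(a)$. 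Finally, (iii) is a contradiction/compactness argument: if (\ref{10,1}) failed at some $t_n\to T(a)$, then in self-similar variables $(w_a,\partial_s w_a)(\cdot,s_n)$ with $s_n=-\log(T(a)-t_n)$ would tend to zero in $H^{1}\times L^{2}$, which combined with the Lyapunov functional would force $w_a(\cdot,s_n)$ to converge to a trivial profile incompatible with $(a,T(a))\in\Gamma$.

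\textbf{Main obstacle.} The most delicate step is the construction of the Lyapunov functional: choosing the weight $\rho$ so that its dissipation is genuinely nonpositive despite the cross term $-2y\,\partial^{2}_{s,y}w$, while keeping $\rho$ well-behaved at the degenerate boundary $|y|=1$, is where the exponential nonlinearity departs from the polynomial case and where the low regularity $(H^{1}_{loc}\times L^{2}_{loc})$ setting will matter. A secondary difficulty is the careful separation between the integral and pointwise forms of the lower bound in (ii), since the latter genuinely requires the $W^{1,\infty}\times L^{\infty}$ hypothesis.
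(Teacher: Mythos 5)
Your plan rests on a Lyapunov functional that does not exist in the form you describe, and this undermines every subsequent step. For equation (\ref{equaw}) the natural energy (Proposition \ref{2.1}) is $E(w)=\int_{-1}^{1}\bigl(\tfrac12(\partial_s w)^2+\tfrac12(1-y^2)(\partial_y w)^2-e^{w}+2w\bigr)\,dy$: multiplying (\ref{equaw}) by $\partial_s w$ forces the sign $-e^{w}$, not $+e^{w}$, and no weight $\rho$ is needed (the cross term produces exactly the boundary dissipation $-(\partial_s w(\pm1,s))^2$). Because the potential is $-e^{w}+2w$, the functional is \emph{not} coercive --- it is unbounded below in $w$ --- so its monotonicity yields no $H^1\times L^2$ bound on $(w_a,\partial_s w_a)$, no pointwise bound $w_a(0,s)\le C(R)$ by Morrey, and hence no starting point for your transfer identity (which is otherwise correct). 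This is precisely why the paper does not prove Theorem \ref{th} in similarity variables: the upper bound in (i) is obtained in the $(x,t)$ variables by Godin's ODE-type iteration in light cones (Proposition \ref{p}, item (i)), whose engine is the sign condition $\partial_{\xi\eta}\bar u_n=\tfrac14 F_n(\bar u_n)\ge0$ in null coordinates (Lemmas \ref{lab} and \ref{3.2}), giving $u(x,t_{n+1})\ge u(x,t_n)+1$ with $t_{n+1}-t_n=\sigma e^{-u(x,t_n)/2}$ and a convergent geometric series for $T(x)-t$.

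Two further steps fail independently of the Lyapunov issue. For the integral bound in (ii), the upper bound on $u$ from (i) controls $e^{-w_a}$ from \emph{below}, not from above, so no interpolation with (i) can bound $\int_{-1}^{1}e^{-w_a(y,s)}\,dy$; the paper instead differentiates $\int e^{-u_n(x,t)}\,dx$ in time, bounds $\partial_t u_n\, e^{-u_n}$ by Cauchy--Schwarz on the free part and by Lemma \ref{lab} on the Duhamel part, and integrates up to the time $t_0'=2T(x_0)-t_0$ whose slice lies entirely above the blow-up curve, where $\int e^{-u_n}\to0$ (Proposition \ref{ll}, item (ii)). For (iii), your compactness argument would require a classification of self-similar profiles that is not available here; the paper's route is elementary: rescale the slice $I(a,t)$ to the unit light cone and invoke the Shatah--Struwe non-blow-up criterion (Proposition \ref{theor} and Corollary \ref{pro}), so that smallness of the local energy norm would let the solution cross $t=T(x_0)$, contradicting the definition of the blow-up curve.
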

\noindent {\bf Remark}: The upper bound in item $i)$ was already proved by Godin \cite{MR1854257}, for more regular initial data. Here, we show that Godin's strategy works even for less regular data. We refer to the integral in (\ref{10,1}) as the local energy "norm", since it is like the local energy as in Shatah and Struwe, though with the "$+$" sign  in front of the nonlinear term. Note that the lower bound in item $iii)$ is given by the solution of the associated ODE $u''=e^u$. However the lower bound in $ii)$ doesn't seem to be optimal, since it does not obey the ODE behavior.  Indeed, we expect the blow-up for equation (\ref{waveq}) in the "ODE style", in the sense that the solution is comparable to the solution of the ODE $u"=e^u$ at blow-up. This is in fact the case with regular data, as shown by Godin  \cite{MR1854257}. \\

\bigskip

\noindent If in addition $a \in \cal{R}$, we have optimal blow-up estimates:
\begin{theor}{ \bf (An optimal bound on the blow-up rate near a non-characteristic point in a smaller space)}\label{NEW} 
Assume that $(u_0,u_1)\in W^{1,\infty}\times L^{\infty}$. Then, for all $R>0$, for any $a \in \cal{R}$ such that $(a,T(a))\in D_R$, we have the following
\begin{itemize}
\renewcommand{\labelitemi}{}
\item{i) }\emph{\bf(Uniform bounds on $w$)} For all $s\geq -\log T(a)+1,$
\begin{eqnarray*}
| w_a(y,s)|+\int_{-1}^{1}
\left( ({\partial_s w_a}(y,s))^2 +
({\partial_y w_a}(y,s))^2\right)\, dy\le C(R)
\end{eqnarray*}
where $w_a$ is defined in (\ref{trans_auto}).
 \noindent \item{ii) }\emph{\bf(Uniform bounds on $u$)} For all $t \in[0,T(a))$
 \begin{eqnarray*}
| u(x,t)+2\log(T(a)-t)|+(T(a)-t)\int_{I}
 (\partial_x u (x,t))^2 + (\partial_t u(x,t))^2\, dx\le C(R).
 \end{eqnarray*}
 In particular, we have
  \begin{eqnarray*}
 \frac{1}{C(R)}\le e^ {u (x,t) }(T(a)-t)^2\, \le C(R).
 \end{eqnarray*}
 \end{itemize}
\end{theor}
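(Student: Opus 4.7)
The strategy follows the Merle--Zaag program for the power nonlinearity, adapted to the exponential case as in Godin~\cite{MR1854257}. The central object is a Lyapunov functional for (\ref{equaw}) in similarity variables: multiplying (\ref{equaw}) by $\partial_s w_a$, possibly against a weight $\rho(y)$, and integrating by parts on $(-1,1)$ shows that
\[
E(s) := \int_{-1}^{1}\!\Bigl[\tfrac12(\partial_s w_a)^2 + \tfrac12(1-y^2)(\partial_y w_a)^2 - e^{w_a} + 2w_a\Bigr]\rho(y)\,dy
\]
satisfies $\frac{d}{ds}E(s)\le -\mathcal D(s)\le 0$ with a non-negative dissipation $\mathcal D$ supported at the light cone $y=\pm 1$ (and, for a suitable choice of weight, in the interior as well).

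An upper bound $E(s)\le C(R)$ follows from the estimate $w_a(y,s)\le -2\log(1-|y|)+C(R)$ of Theorem~\ref{th}(i), which makes the potential terms $-e^{w_a}+2w_a$ controllable. A matching lower bound $E(s)\ge -C(R)$ comes from rewriting Theorem~\ref{th}(iii) in similarity variables: the identities $u_t=(\partial_s w_a+y\partial_y w_a+2)/(T(a)-t)$, $u_x=\partial_y w_a/(T(a)-t)$, $e^u=e^{w_a}/(T(a)-t)^2$ turn the local-energy inequality into
\[
\int_{-1}^{1}\!\bigl[(\partial_s w_a+y\partial_y w_a+2)^2+(\partial_y w_a)^2+e^{w_a}\bigr]dy\;\ge\;\varepsilon_0 ,
\]
forcing the kinetic and gradient parts of $E$ to remain bounded from below. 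Monotonicity then produces convergence of $E(s)$ to a finite limit together with the dissipation bound $\int_{-\log T(a)}^{\infty}\mathcal D(s)\,ds\le C(R)$.

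The next step promotes this dissipation to uniform $L^\infty\cap H^1$ control of $w_a$ on $(-1,1)$, and it is here that the non-characteristic assumption $a\in\mathcal R$ is decisive. The cone condition (\ref{4}) with slope $\delta_0=\delta_0(a)$ implies that for every $a'$ with $|a'-a|$ sufficiently small and every $T'$ sufficiently close to $T(a)$, the function $w_{a',T'}$ is itself well defined on $(-1,1)$ and obeys its own monotone $E$-inequality. Varying the base point $(a',T')$ in a small space-time neighbourhood of $(a,T(a))$ exchanges the boundary $y=\pm 1$ of $w_a$ with interior slices of $w_{a',T'}$, so the dissipation integral controls $w_a$ at each $y_0\in(-1,1)$. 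A covering argument combining this with the $L^2_s$-control of $\partial_s w_a$ and a Gronwall step on (\ref{equaw}) yields $\|w_a(\cdot,s)\|_{L^\infty\cap H^1((-1,1))}\le C(R)$ uniformly in $s\ge -\log T(a)+1$, which is item (i). Item (ii) is the pullback of (i) to physical variables via (\ref{trans_auto}); in particular $(C(R))^{-1}\le e^{u(x,t)}(T(a)-t)^2\le C(R)$ is just the $L^\infty$ bound $|w_a|\le C(R)$.

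The hardest step is the covering argument of the previous paragraph. The exponential nonlinearity is not scale invariant, so the transformation (\ref{trans_auto}) is only affine and picks up an additive logarithmic shift $2\log((T(a)-t)/(T'-t))$ when the blow-up time varies; comparing $w_{a,T(a)}$ with $w_{a',T'}$ therefore requires quantitatively controlling this shift via the cone slope $\delta_0$ and finite speed of propagation. The $W^{1,\infty}\times L^\infty$ hypothesis is needed precisely here, through the stronger lower bound of Theorem~\ref{th}(ii) ($e^{u(a,t)}(T(a)-t)\ge C(R)$), which prevents $w_a$ from drifting to $-\infty$ and closes the estimate from below.
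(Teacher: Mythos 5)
Your overall strategy --- running the Merle--Zaag similarity-variable program (monotone energy $E$, dissipation concentrated at $y=\pm1$, covering argument over base points $(a',T')$) --- is not the route the paper takes, and as written it has gaps that are not merely technical. First, your lower bound $E(s)\ge -C(R)$ is unjustified: Theorem \ref{th}(iii), rewritten in similarity variables, is a lower bound on a sum of nonnegative quantities, $\int[(\partial_s w_a+y\partial_y w_a+2)^2+(\partial_y w_a)^2+e^{w_a}]dy\ge\varepsilon_0$, and this says nothing about the terms $-\int e^{w_a}$ and $+2\int w_a$ in $E$, which are exactly the ones that could drive $E$ to $-\infty$; in the power case the lower bound on $E$ comes from a separate Antonini--Merle blow-up criterion, which is neither stated nor proved here for the exponential nonlinearity. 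Second, the step you yourself identify as the hardest --- converting boundary dissipation into interior $L^\infty\cap H^1$ control despite the additive shift $2\log((T(a)-t)/(T'-t))$ produced by changing the blow-up time --- is only asserted; carrying it out is essentially the content of several of the Merle--Zaag papers in the scale-invariant setting, and it is precisely what this paper is organized to avoid. Third, the bound you invoke to ``prevent $w_a$ from drifting to $-\infty$'', namely $e^{u(a,t)}(T(a)-t)\ge C(R)$ from Theorem \ref{th}(ii), gives only $w_a(0,s)\ge -s+\log C$, which does tend to $-\infty$; what is actually needed is the quadratic lower bound $e^{u}d((x,t),\Gamma)^2\ge 1/C$ of Proposition \ref{p}(ii), which holds only near non-characteristic points and requires its own argument (integrating $e^{(a-1)u}\partial_t u$ with $a\in(\frac12,1)$, together with the coercivity Lemma \ref{fo}).

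For comparison, the paper's proof never uses the similarity-variable Lyapunov functional of Proposition \ref{2.1}. The $L^\infty$ bound $|w_a|\le C(R)$ is read off directly (Corollary \ref{cou}) from the two-sided pointwise ODE estimates $1/C\le e^{u}d((x,t),\Gamma)^2\le C$ of Proposition \ref{p}, combined with the geometric comparisons of Lemmas \ref{co} and \ref{so}; the upper bound there is obtained by a Duhamel iteration on nested squares, the lower bound by the weighted integration just mentioned. The $H^1$ bound then follows from a physical-space Shatah--Struwe energy $\mathcal{E}_a(t)$ on the truncated light cone: its time derivative is controlled by the lateral flux $e^{u(a\pm(T(a)-t),t)}\le C(T(a)-t)^{-2}$, which integrates to $\mathcal{E}_a(t)\le C(T(a)-t)^{-1}$, and the kinetic and gradient terms are then isolated since $\int e^{u}\,dx\le C(T(a)-t)^{-1}$. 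To salvage your approach you would at minimum have to supply the three missing ingredients above; the pointwise route is considerably shorter.
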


\noindent {\bf Remark}: This result implies that the solution indeed blows up on the curve $\Gamma$.

\noindent {\bf Remark}: Note that when $a\in  \cal{R}$, Theorem \ref{th} already holds and directly follows from Theorem \ref{NEW}. Accordingly, Theorem \ref{th} is completely meaningful when $a\in \cal{S}$.\\
Following Antonini, Merle and Zaag in \cite{AM01} and \cite{MZ2005}, we would like to mention the existence of a Lyapunov functional in similarity variables. More precisely, let us define 
\begin{equation}\label{lyapunov} E(w(s))=\int_{-1}^{1}
\left(\frac{1}{2} ({\partial_s w})^2 +\frac{1}{2} (1-y^2)
({\partial_y w})^2-e^w+2 w\right)\, dy.
\end{equation}
We claim that the functional $E$ defined by (\ref{lyapunov}) is a
decreasing function of time for solutions of (\ref {equaw}) on
(-1,1).

\begin{propo}{\bf (A Lyapunov functional for equation (\ref{waveq}))}\label{2.1}
For all $a\in \mathbb{ R},\, T \le T(a)
 ,\,
s_2\geq s_1\geq-\log T$, the following identities hold for $w=w_{a,T}$:
\begin{eqnarray*}
E(w(s_2))- E(w(s_1))=-\int_ {s_1}^{s_2} ({\partial_s w(-1,s)})^2+(
{\partial_s w(1,s)})^2 ds.\end{eqnarray*}

\end{propo}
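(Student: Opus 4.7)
The plan is to differentiate $E(w(s))$ along trajectories of \eqref{equaw} and show the integrand reduces, after suitable integrations by parts, to pure boundary dissipation at $y=\pm 1$. I would first work formally assuming $w$ is smooth enough, and then explain how to justify the computation for $(u_0,u_1)\in H^1_{loc,u}\times L^2_{loc,u}$ by a density/regularization argument.

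Formally, differentiating under the integral sign gives
\begin{equation*}
\frac{d}{ds}E(w(s))=\int_{-1}^{1}\Bigl(\partial_s w\,\partial_s^2 w+(1-y^2)\partial_y w\,\partial_{y,s}^2 w-e^w\partial_s w+2\partial_s w\Bigr)\,dy.
\end{equation*}
The key observation is that the weight $(1-y^2)$ vanishes at $y=\pm 1$, so integrating the second term by parts in $y$ produces no boundary contribution and yields $-\int_{-1}^{1}\partial_y((1-y^2)\partial_y w)\,\partial_s w\,dy$. Substituting this back and using the PDE \eqref{equaw} to replace $\partial_s^2 w-\partial_y((1-y^2)\partial_y w)-e^w+2$ by $-\partial_s w-2y\,\partial_{y,s}^2 w$, I obtain
\begin{equation*}
\frac{d}{ds}E(w(s))=-\int_{-1}^{1}(\partial_s w)^2\,dy-2\int_{-1}^{1}y\,\partial_s w\,\partial_{y,s}^2 w\,dy.
\end{equation*}

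To handle the remaining cross term, I rewrite $2y\,\partial_s w\,\partial_{y,s}^2 w=y\,\partial_y\bigl((\partial_s w)^2\bigr)$ and integrate by parts once more in $y$:
\begin{equation*}
\int_{-1}^{1}y\,\partial_y\bigl((\partial_s w)^2\bigr)\,dy=\bigl[y(\partial_s w)^2\bigr]_{-1}^{1}-\int_{-1}^{1}(\partial_s w)^2\,dy.
\end{equation*}
The bulk term cancels exactly with the first $-\int(\partial_s w)^2$, leaving only the boundary contribution $-(\partial_s w(1,s))^2-(\partial_s w(-1,s))^2$. Integrating in $s$ from $s_1$ to $s_2$ yields the claimed identity.

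The main obstacle is regularity: for $(u_0,u_1)\in H^1_{loc,u}\times L^2_{loc,u}$ the trace $\partial_s w(\pm 1,s)$ and the pointwise manipulations above are not a priori meaningful. I would proceed by mollifying the initial data to obtain smooth approximate solutions $w^{(n)}$ for which the identity holds rigorously, then pass to the limit using the continuous dependence of the Cauchy problem (cf.\ Section \ref{cauchy-Problem}) together with the exponential nonlinearity's local Lipschitz character on the finite cone of influence. The boundary terms $\partial_s w(\pm 1,s)$ should be understood in the $L^2_{loc}(ds)$ sense as limits of their mollified counterparts, which is the natural framework already used in Antonini--Merle--Zaag \cite{AM01} and \cite{MZ2005}. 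Once the identity is established in this distributional/integrated form, monotonicity of $E(w(s))$ follows immediately.
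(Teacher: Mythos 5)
Your computation is correct and is essentially the paper's own proof: the paper multiplies (\ref{equaw}) by $\partial_s w$ and integrates over $(-1,1)$, performing exactly your two integrations by parts (the one killed by the degenerate weight $(1-y^2)$ at $y=\pm1$, and the one on $y\,\partial_y((\partial_s w)^2)$ that produces the boundary dissipation), so the two arguments are the same calculation read in opposite directions. Your additional paragraph on justifying the formal manipulations by mollification is a reasonable supplement that the paper does not spell out, but it does not change the substance of the argument.
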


\noindent{\bf Remark}: The existence of such an energy in the
context of the nonlinear heat equation has been introduced by Giga
and Kohn in \cite{gk85}, \cite{gk87} and \cite{gk89}.

\noindent{\bf Remark}: As for the semilinear wave equation with conformal power nonlinearity, the dissipation of the energy $E(w)$ degenerates to the boundary $\pm 1$.

\bigskip
This paper is organized as follows:\\
In Section \ref{cauchy-Problem}, we solve the local in time Cauchy problem.\\
Section \ref{sec2} is devoted to some energy estimates.\\
In Section \ref{sec3}, we give and prove upper and lower bounds, following the strategy of Godin \cite{MR1854257}.\\
Finally, Section \ref{sec4} is devoted to the proofs of Theorem \ref{th}, Theorem \ref{NEW} and Proposition \ref{2.1}.
\section{The local Cauchy problem} \label{cauchy-Problem}
In this section, we solve the local Cauchy problem associated to (\ref{waveq}) in the space $H^{1}_{loc,u}\times L^2_{loc,u}$. In order to do so, we will proceed in two steps :\\
\noindent $1)$ In Step 1, we solve the problem in $H^{1}_{loc,u}\times L^2_{loc,u}$, for some uniform $T>0$ small enough. \\
\noindent $2)$ In Step 2, we consider $x_0\in \mathbb{R}$, and use step 1 and a truncation to find a local solution defined in some cone $\mathcal{C}_{x_0,\tilde T(x_0),1}$ for some $\tilde T(x_0)>0$. Then, by a covering argument, the maximal domain of definition is given by $D=\displaystyle\cup_{x_0\in \mathbb{R}}\mathcal{C}_{x_0,\tilde T(x_0),1}$.\\
\noindent $3)$ In Step 3, we consider some approximation of equation (\ref{waveq}), and discuss the convergence of the approximating sequence.
\medskip

\noindent {\bf Step 1: The Cauchy problem in $H^{1}_{loc,u}\times L^2_{loc,u}$ }\\
In this step, we will solve the local Cauchy problem associated to (\ref{waveq}) in the space $H=H^{1}_{loc,u}\times L^2_{loc,u}$. In order to do so, we will apply a fixed point technique. We first introduce the wave group in one space dimension:
\begin{eqnarray*}
S(t):H&\rightarrow& H\\
(u_0, u_1)&\mapsto&S (t)( u_0,  u_1)(x),
\end{eqnarray*}

\begin{eqnarray*}
S (t)( u_0,  u_1)(x)=
\begin{pmatrix}\displaystyle
\frac{1}{2}(u_0(x+t)+ u_0(x-t))+\frac{1}{2} \int_{x-t}^{x+t}u_1\\
\frac{1}{2}(u_0'(x+t)- u_0'(x-t))+\frac{1}{2}(u_1(x+t)+ u_1(x-t))
 \end{pmatrix}.
\end{eqnarray*}

Clearly, $S(t)$ is well defined in $H$, for all $t\in \mathbb{R}$, and more precisely, there is a universal constant $C_{0}$ such that
 \begin{eqnarray}\label{continS}
 ||S(t)(u_0, u_1)||_{H}\leq C_{0} (1+t) ||(u_0, u_1)||_{H}.
 \end{eqnarray}

This is the aim of the step:
\begin{lem}{\bf (Cauchy problem in $H^{1}_{loc,u}\times L^2_{loc,u}$)}\label{a1}
 For all $(u_0,u_1)\in H$, there exists $T>0$ such that there exists a unique solution of the problem (\ref{waveq}) in $C([0,T],H).$ 
\end{lem}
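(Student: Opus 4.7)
The plan is to apply Banach's fixed point theorem to the Duhamel reformulation of (\ref{waveq}). Writing $U(t)=(u(t),\partial_t u(t))$, a solution in $C([0,T],H)$ is a fixed point of
\begin{equation*}
\Phi(U)(t)=S(t)(u_0,u_1)+\int_0^t S(t-\tau)\bigl(0,e^{u(\tau)}\bigr)\,d\tau.
\end{equation*}
The linear estimate (\ref{continS}) handles the free part. The nonlinearity is controlled by the following one-dimensional observation: for any interval $I$ of unit length, the Sobolev embedding $H^1(I)\hookrightarrow L^\infty(I)$ holds with a constant independent of $I$, so there is a universal $C_1$ with $\|v\|_{L^\infty(\mathbb{R})}\le C_1\|v\|_{H^1_{loc,u}}$. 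Hence, for $v\in H^1_{loc,u}$,
\begin{equation*}
\|e^v\|_{L^2_{loc,u}}\le e^{C_1\|v\|_{H^1_{loc,u}}},
\end{equation*}
and by the mean value theorem,
\begin{equation*}
\|e^{v_1}-e^{v_2}\|_{L^2_{loc,u}}\le e^{C_1\max(\|v_1\|_{H^1_{loc,u}},\|v_2\|_{H^1_{loc,u}})}\,\|v_1-v_2\|_{L^2_{loc,u}}.
\end{equation*}

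Fix $M=2C_0(1+1)\|(u_0,u_1)\|_H+1$ and let $B_{M,T}$ be the closed ball of radius $M$ in $C([0,T],H)$ centered at the origin, with $T\le 1$ to be chosen. For $U\in B_{M,T}$ the bound just stated gives $\|e^{u(\tau)}\|_{L^2_{loc,u}}\le e^{C_1M}$, so $(0,e^{u(\tau)})\in H$ with uniform norm. Applying (\ref{continS}) inside the Duhamel integral,
\begin{equation*}
\|\Phi(U)(t)\|_H\le C_0(1+t)\|(u_0,u_1)\|_H+C_0(1+t)\,T\,e^{C_1M},
\end{equation*}
so taking $T$ small enough yields $\|\Phi(U)\|_{C([0,T],H)}\le M$; that is, $\Phi$ maps $B_{M,T}$ into itself. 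For contractivity, if $U_1,U_2\in B_{M,T}$ then
\begin{equation*}
\|\Phi(U_1)(t)-\Phi(U_2)(t)\|_H\le C_0(1+t)\,T\,e^{C_1M}\,\|U_1-U_2\|_{C([0,T],H)},
\end{equation*}
and shrinking $T$ further makes $\Phi$ a strict contraction. Banach's theorem produces a unique fixed point $U\in B_{M,T}$; uniqueness in the whole space $C([0,T],H)$ follows by a standard Gronwall argument, using that any two solutions remain bounded on $[0,T']$ for some $T'>0$ and applying the same Lipschitz estimate on $e^u$.

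The one genuine obstacle is making sure that the nonlinearity $e^u$ is handled uniformly in the \emph{unlocalized} spaces $H^1_{loc,u}$ and $L^2_{loc,u}$: the gain relies crucially on the 1D embedding $H^1\hookrightarrow L^\infty$ with a constant uniform over unit intervals, which is what allows us to pass from the $H^1_{loc,u}$-norm of $u$ to a pointwise bound and then to an $L^2_{loc,u}$ bound on $e^u$. Once this uniform embedding is in hand, the rest is a routine small-time contraction argument and the time $T$ depends only on $\|(u_0,u_1)\|_H$.
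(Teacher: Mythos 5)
Your proposal is correct and follows essentially the same route as the paper: a Banach fixed point argument for the Duhamel map $\Phi$ in a ball of $C([0,T],H)$, with the key point being the uniform 1D embedding $H^1_{loc,u}\hookrightarrow L^\infty$ to control $e^u$ in $L^2_{loc,u}$, and $T$ chosen small in terms of $\|(u_0,u_1)\|_H$. The only (minor) addition is your remark upgrading uniqueness from the ball to all of $C([0,T],H)$ via Gronwall, which the paper does not spell out.
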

\begin{proof}
Consider $T>0$ to be chosen later small enough in terms of $ ||(u_0, u_1)||_{H}$.

\noindent We first write the Duhamel formulation for our equation
\begin{eqnarray}\label{DUHAMEL}
u(t)=S(t)(u_0, u_1)+\int_{0}^{t} S(t-\tau)(0, e^{u(\tau)}) d\tau.
\end{eqnarray}
Introducing
\begin{eqnarray}\label{rayon}
R=2 C_0 (1+T) ||(u_0, u_1)||_{H},
\end{eqnarray} we will work in the Banach space $E=C([0, T], H)$ equipped with the norm $||u||_{E}=\sup\limits_{0\leq t \leq T
}||u ||_{H}.$
Then, we introduce
\begin{eqnarray*}
\Phi:E&\rightarrow& E\\
V(t)=\begin{pmatrix}v(t)\\ v_{1}(t)\end{pmatrix}&\mapsto& S (t)(u_0, u_1)+\int_{0}^{t}S(t-\tau)(0, e^{v(t)}) d\tau \displaystyle
\end{eqnarray*}
and the ball $B_E(0, R)$.\\
\medskip
We will show that for $T>0$ small enough, $\Phi$ has a unique fixed point in $B_E(0, R)$. To do so, we have to check 2 points:

\begin{enumerate}
\item $\Phi$ maps $B_E(0, R)$ to itself.
\item $\Phi$ is k-Lipschitz with $k<1$ for $T$ small enough.
\end{enumerate}
\begin{itemize}
\renewcommand{\labelitemi}{$\bullet$}
\item Proof of 1: Let $V=\begin{pmatrix}v\\ v_{1}\end{pmatrix}\in B_E(0, R)$, this means that: $$\forall t \in [0, T], \; v(t) \in H_{loc, u}^{1}(\Bbb R)\subset L^{\infty}(\Bbb R)$$ and that $$||v(t)||_{L^{\infty}(\Bbb R)}\leq C_{*}R.$$\\
Therefore
\begin{eqnarray}\label{carreaux}
||(0, e^{v})||_{E}&=&\sup\limits_{0\leq t \leq T}||e^{v(t)} ||_{L^2_{loc, u}}\notag\\
&\leq& e^{C_{*}R\sqrt{2}}.
\end{eqnarray}
This means that $$\forall \tau \in [0, T]\; (0, e^{v(\tau)}) \in H,$$ hence $S(t-\tau)(0, e^{v(\tau)})$ is well defined from (\ref{continS}) and so is its integral  between $0$ and $t$.
So $\Phi$ is well defined from $E$ to $E$.

\bigskip \noindent Let us compute $||\Phi(v)||_{E}$:\\ Using (\ref{continS}), (\ref{rayon}) and (\ref{carreaux}) we write for all $t\in [0, T]$,
\begin{eqnarray}
||\Phi(v)(t)||_{H}&\leq &|| S (t)(u_0, u_1)||_{H}+\int_{0}^{t}|| S (t-\tau)(0, e^{v(\tau)})||_{H}d\tau \notag\\
&\leq & \frac{R}{2}+\int_{0}^{T} C_{0}(1+T) \sqrt{2} e^{C_* R} d\tau \notag\\
&\leq & \frac{R}{2}+C_{0}T(1+T)\sqrt{2}e^{C_* R}.
\end{eqnarray}
Choosing $T$ small enough so that $$\displaystyle \frac{R}{2}+C_{0}T(1+T)\sqrt{2}e^{C_* R}\leq R$$
or $$T(1+T)\leq \frac{R e^{-C_* R}}{2\sqrt{2}C_0}$$ guarantees that $\Phi$ goes from $B_E(0, R)$ to $B_E(0, R)$.
\bigskip
\item Proof of 2: Let $V, \bar{V} \in B_E(0, R)$ we have
$$\Phi(V)-\Phi(\bar{V})=\int_{0}^{T}  S (t-\tau)(0, e^{v(t)}-e^{\bar{v}(t)})d\tau.$$
Since $ ||v(t)||_{L^{\infty}(\Bbb R)}\leq C_* R$ and the same for $ ||\bar{v}(t)||_{L^{\infty}(\Bbb R)}$, we write
$$|e^{v(\tau)}-e^{\bar{v}(\tau)}|\leq e^{C_* R} |v(\tau)-\bar{v}(\tau)|, $$

hence
\begin{eqnarray}
||e^{v(\tau)}-e^{\bar{v}(\tau)}||_{L^2_{loc, u}}&\leq& e^{C_* R} ||v(\tau)-\bar{v}(\tau)||_{L^2_{loc, u}}\notag \\
&\leq& e^{C_* R} ||V-\bar{V}||_{E}.
\end{eqnarray}
Applying $S(t-\tau)$ we write from (\ref{continS}), for all $0\le \tau \le t\le \tau \in  T,$
\medskip
\begin{eqnarray}\displaystyle
||S(t-\tau) (0, e^{v(\tau)}-e^{\bar{v}(\tau)})||_{H}&\leq& C_0 (1+T)||(0, e^{v(\tau)}-e^{\bar{v}(\tau)})||_{H}\notag \\
&\leq& C_0 (1+T)|| e^{v(\tau)}-e^{\bar{v}(\tau)}||_{L^2_{loc, u}}\notag \\
&\leq&C_0 (1+T) e^{C_* R} ||V-\bar{V}||_{E}.
\end{eqnarray}
Integrating we end-up with
\medskip
\begin{eqnarray}
||\Phi(V)-\Phi(\bar{V})||_{E}\leq C_0 T(1+T) e^{C_* R} ||V-\bar{V}||_{E}.
\end{eqnarray}
\medskip
$k= C_0 T(1+T) e^{C_* R}$ can be made $<1$ if $T$ is small.\\

\end{itemize}
{\it Conclusion}: From points 1 and 2, $\Phi$ has a unique fixed point $u(t)$ in $B_E (0, R)$. This fixed point is the solution of the Duhamel formulation (\ref{DUHAMEL}) and of our equation (\ref{waveq}). This concludes the proof of Lemma \ref{a1}.
\end{proof}
\noindent {\bf Step 2: The Cauchy problem in a larger region}\\

\noindent Let $( u_0,  u_1)\in H^{1}_{loc,u}\times L^2_{loc,u}$ initial data for the problem (\ref{waveq}). Using the finite speed of propagation, we will localize the problem and reduces it to the case of initial data in $H^{1}_{loc,u}\times L^2_{loc,u}$ already treated  in Step 1. For $(x_0,t_0)\in \mathbb{R}\times (0,+\infty)$, we will check the existence of the solution in the cone $\mathcal{C}_{x_0,t_0,1}$. In order to do so, we introduce $\chi $ a $C^\infty$ function with compact support such that $ \chi(x)= 1$ if $|x-x_0|<t_0$, let also $(\bar u_0, \bar u_1)=(u_0\chi ,u_1\chi )$ (note that $\bar u_0$ and $\bar u_1$ depend on $(x_0,t_0)$ but we omit this dependence in the indices for simplicity). So, $(\bar u_0, \bar u_1)\in  H^{1}_{loc,u}\times L^2_{loc,u}$. From Step 1, if $\bar u$ is the corresponding solution of equation (\ref{waveq}), then, by the finite speed of propagation, $u=\bar u$ in the intersection of their domains of definition with the cone $\mathcal{C}_{x_0,t_0,1}$. As $\bar u$ is defined for all $(x,t)$ in $\mathbb{R}\times [0,T)$ from Step 1 for some $T=T(x_0,t_0)$, we get the existence of $u$ locally in $\mathcal{C}_{x_0,t_0,1}\cap\mathbb{R}\times [0,T)$.
Varying $(x_0,t_0)$ and covering $\mathbb{R}\times (0,+\infty[$ by an infinite number of cones, we prove the existence and the uniqueness of the solution in a union of backward light cones, which is either the whole half-space $\mathbb{R}\times (0,+\infty)$, or the subgraph of a 1-Lipschitz function $x\mapsto T(x)$. We have just proved the following:
\begin{lem}{\bf (The Cauchy problem in a larger region)}
Consider $( u_0,  u_1)\in H^{1}_{loc,u}\times L^2_{loc,u}$. Then, there exists a unique solution defined in $D$, a subdomain of $\mathbb{R}\times [0,+\infty)$, such that for any $(x_0,t_0)\in D, (u,\partial_tu)_{(t_0)}\in H^1_{loc}\times L^2_{loc}(D_{t_0})$, with $D_{t_0}=\{x\in\mathbb{R}|(x,t_0)\in D\}$. Moreover,
\begin{itemize}
 \item [$-$] either $D=\mathbb{R}\times [0,+\infty)$,
\item [$-$] or $D=\{(x,t)| 0\le t< T(x)\}$ for some 1-Lipschitz function $x\mapsto T(x)$.
\end{itemize}

\end{lem}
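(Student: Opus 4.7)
The plan is to construct the solution on a maximal domain $D$ by a localization-and-gluing argument, using Lemma \ref{a1} (Step 1) together with the finite speed of propagation for (\ref{waveq}). The three tasks are: given any basepoint $(x_0,t_0)$, produce a local solution on the cone $\mathcal{C}_{x_0,t_0,1}$; show that these local solutions patch consistently on overlaps; and identify the shape of the maximal domain.

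For the first task I would fix $(x_0,t_0)$ and introduce a smooth cutoff $\chi$ that is compactly supported and equal to $1$ on $\{|x-x_0|\le t_0\}$. The truncated data $(\chi u_0,\chi u_1)$ lies in $H^1(\mathbb R)\times L^2(\mathbb R)\subset H^{1}_{loc,u}\times L^{2}_{loc,u}$, so Lemma \ref{a1} produces a unique local solution $\bar u$ on $[0,\tau)$ for some $\tau=\tau(x_0,t_0)>0$, and by shrinking $t_0$ one may arrange $\tau>t_0$. Finite speed of propagation---which is preserved by the Duhamel iteration (\ref{DUHAMEL}), since $S(t)$ transports information at speed $1$ and the nonlinearity $e^u$ is pointwise---shows that the restriction of $\bar u$ to $\mathcal{C}_{x_0,t_0,1}$ is independent of the choice of cutoff $\chi$, giving a well-defined local solution $u$ on the cone. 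The regularity $(u(\cdot,t),\partial_t u(\cdot,t))\in H^{1}_{loc}\times L^{2}_{loc}$ on each time slice is inherited from the membership $\bar u\in C([0,\tau),H)$.

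For the remaining two tasks, let $D$ be the union of all such cones and set $T(x)=\sup\{t\ge 0:(x,t)\in D\}\in(0,+\infty]$. On overlapping cones any two local solutions agree by the same finite-speed argument, so they glue to a single solution $u$ on $D$, and uniqueness on each time slice follows the same way. To show that $T$ is $1$-Lipschitz wherever finite, I would observe that if $(x,t)\in D$ then reapplying the local theory to the data $(u(\cdot,t),\partial_t u(\cdot,t))$ extends $u$ to a full cone $\mathcal{C}_{x,t+\eta,1}$, so that $T(y)\ge t+\eta-|y-x|$ for $y$ near $x$; letting $t\to T(x)^-$ and swapping the roles of $x$ and $y$ yields $|T(x)-T(y)|\le|x-y|$. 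The main obstacle I anticipate is the bookkeeping linking the uniform-local norm of $(u_0,u_1)$ to the compactly supported truncation---ensuring that $\tau(x_0,t_0)>t_0$ after possibly shrinking $t_0$, uniformly near $x_0$---but this just amounts to applying Step 1 at a localized scale, after which the rest is a clean patching argument.
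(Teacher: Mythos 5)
Your proposal is correct and follows essentially the same route as the paper: truncate the data with a compactly supported cutoff, apply the uniform local theory of Step 1, use finite speed of propagation to identify the local solutions on backward light cones, and obtain the maximal domain $D$ as a union of such cones. The only cosmetic difference is that the paper reads off the $1$-Lipschitz property of $x\mapsto T(x)$ directly from $D$ being a union of unit-slope cones (so $T$ is a supremum of $1$-Lipschitz functions), whereas you rederive it by re-solving from time slices; both arguments work.
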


\noindent {\bf Step 3: Regular approximations for equation (\ref{waveq})}\\

\noindent Consider $( u_0, u_1)\in H^{1}_{loc,u}\times L^2_{loc,u}$, $u$ its solution constructed in Step 2, and assume that it is non global,
hence defined under the graph of a 1-Lipschitz function $x\mapsto T(x).$ Consider for any $n\in \mathbb{N}$, a regularized increasing truncation of $F$ satisfying 

\begin{equation} F_n(u)=\left\{
\begin{array}{l}
\displaystyle e^{u}\mbox{ if }  u\le n \\
e^{n}\mbox{ if }  u\ge n+1
\end{array}
\right .\label{16.3}
\end{equation}
and $F_n(u)\le \min(e^u,e^{n+1})$. Consider also a sequence $(u_{0,n},u_{1,n})\in (C^\infty(\mathbb{R}))^2$ such that $(u_{0,n},u_{1,n})\rightarrow (u_0,u_1)$ in $H^1\times L^2(-R,R)$ as $n\rightarrow \infty$, for any $R>0$.\\
Then, we consider the problem
\begin{equation} \left\{
\begin{array}{l}
\displaystyle\partial^2_{t} u_n = \partial^2_{x}  u_n + F_n(u_n), \\
(u_n(0),\partial_t u_{n}(0)) =(u_{0,n},u_{1,n})\in H^{1}_{loc,u}\times L^2_{loc,u}.
\end{array}
\right . \label{16.5}
\end{equation}
Since Steps 1 and 2 clearly extend to locally Lipschitz nonlinearities, we get a unique solution $u_n$ defined in the half-space  $\mathbb{R}\times (0,+\infty)$, or in the subgraph of a 1-Lipschitz function.
Since $F_n(u)\le e^{n+1}$, for all $u\in \mathbb{R}$, it is easy to see that in fact: $u_n$ is defined for all $(x,t)\in \mathbb{R}\times [0,+\infty)$. 
From the regularity of $F_n$, $u_{0,n}$ and $u_{1,n}$, it is clear that $u_n$ is a strong solution in $C^2(\mathbb{R},[0,\infty))$.
Introducing the following sets:
\begin{align}\label{16.55}
 K^+(x,t)=\{(y,s)\in (\mathbb R,\mathbb R_+),|y-x|<s-t\},
\end{align}
$$K^-(x,t)=\{(y,s)\in (\mathbb R,\mathbb R_+),|y-x|<t-s\},$$
 and 
$$K^\pm_R(x,t)=K^\pm(x,t)\cap \overline{ D_R}.$$
we claim the following
\begin{lem}{\bf (Uniform bounds on variations of $u_n$ in cones)}\label{lab}
 Consider $R>0$, one can find $C(R)>0$ such that if $(x,t)\in D\cap \overline{D_R},$ then $\forall n\in \mathbb{N}$:
$$ u_n(y,s)\ge u_n(x,t)-C(R),\; \forall (y,s)\in \overline{K^+_R(x,t)},$$
$$u_n(y,s)\le u_n(x,t)+C(R),\; \forall (y,s)\in \overline{K^-(x,t)}.$$
\end{lem}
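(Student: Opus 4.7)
The plan is to apply d'Alembert's representation to the regular approximant $u_n\in C^2(\mathbb{R}\times[0,\infty))$. Writing $\triangle(X,T)=\{(z,\tau):0\le\tau\le T,\ |z-X|\le T-\tau\}$ for the backward solid characteristic triangle with apex $(X,T)$, the formula reads
$$u_n(X,T)=\tfrac12\bigl(u_{0,n}(X+T)+u_{0,n}(X-T)\bigr)+\tfrac12\int_{X-T}^{X+T}u_{1,n}(z)\,dz+\tfrac12\iint_{\triangle(X,T)}F_n(u_n)\,dz\,d\tau.$$
The decisive structural fact is that $F_n(u_n)\ge 0$, by the definition \eqref{16.3}.

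The heart of the argument is a monotonicity comparison under backward-cone containment. If $(y,s)\in\overline{K^-(x,t)}$, i.e.\ $s\le t$ and $|y-x|\le t-s$, then $\triangle(y,s)\subset\triangle(x,t)$: for $(z,\tau)\in\triangle(y,s)$ the triangle inequality gives $|z-x|\le|z-y|+|y-x|\le(s-\tau)+(t-s)=t-\tau$. Subtracting the d'Alembert identities at $(x,t)$ and $(y,s)$ and discarding the non-negative difference of double integrals, I would obtain
$$u_n(x,t)-u_n(y,s)\ge\tfrac12\bigl(u_{0,n}(x+t)+u_{0,n}(x-t)-u_{0,n}(y+s)-u_{0,n}(y-s)\bigr)+\tfrac12\!\left(\int_{x-t}^{x+t}u_{1,n}-\int_{y-s}^{y+s}u_{1,n}\right).$$
Since $(x,t),(y,s)\in\overline{D_R}$, the eight endpoints $x\pm t,\,y\pm s$ all lie in $[-R,R]$. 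The hypothesis $(u_{0,n},u_{1,n})\to(u_0,u_1)$ in $H^1\times L^2(-R,R)$ combined with the one-dimensional Sobolev embedding $H^1((-R,R))\hookrightarrow L^\infty((-R,R))$ furnishes an $n$-uniform bound $\|u_{0,n}\|_{L^\infty(-R,R)}+\|u_{1,n}\|_{L^2(-R,R)}\le M(R)$, so the right-hand side above is $\ge -C(R)$. This yields the backward-cone inequality $u_n(y,s)\le u_n(x,t)+C(R)$.

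The forward-cone inequality then follows by symmetry: if $(y,s)\in\overline{K^+_R(x,t)}$ one has $(x,t)\in\overline{K^-(y,s)}$, and the previous step applied with the roles of $(x,t)$ and $(y,s)$ swapped produces $u_n(x,t)\le u_n(y,s)+C(R)$. The only delicate point of the whole argument is ensuring that $C(R)$ does not depend on $n$; this is precisely where the \emph{strong} $H^1\times L^2$ convergence of the approximants, rather than some weaker mode, is needed. Everything else is elementary bookkeeping with characteristic triangles and the positivity $F_n\ge 0$.
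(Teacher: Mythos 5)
Your proof is correct, and it takes a genuinely different (and more compact) route than the paper. The paper passes to null coordinates $\xi=(y-x)-(s-t)$, $\eta=-(y-x)-(s-t)$, uses $\partial_{\xi\eta}\bar u_n=\tfrac14 F_n(\bar u_n)\ge 0$ to get monotonicity of $\partial_\eta\bar u_n$ in $\xi$ and of $\partial_\xi\bar u_n$ in $\eta$, bounds these derivatives by their traces on the initial line via geometric projections, and then controls the resulting line integrals by Cauchy--Schwarz using only the uniform $L^2(-R,R)$ bounds on $\partial_x u_{0,n}$ and $u_{1,n}$. You instead subtract the two Duhamel representations at the apexes $(x,t)$ and $(y,s)$, observe that backward-cone containment gives $\triangle(y,s)\subset\triangle(x,t)$ so that the difference of the source integrals has a sign by $F_n\ge 0$, and absorb the remaining free-wave boundary terms using the $n$-uniform $L^\infty\times L^2(-R,R)$ control of the data (the $L^\infty$ bound on $u_{0,n}$ coming from the Sobolev embedding, the $u_{1,n}$ difference being an $L^1$ integral over a subset of $[-R,R]$). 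Both arguments rest on exactly the same two pillars --- positivity of $F_n$ and uniform control of the approximating data on $[-R,R]$ --- but yours avoids the derivative-level bookkeeping and the projections $\xi^*(\eta)$, $\eta^*(\xi)$ entirely; the paper's version keeps track of which characteristic segments of the initial line the constant depends on, which is closer to Godin's original presentation but is not needed for the statement. Your reduction of the forward-cone inequality to the backward one by swapping apexes is legitimate, since your backward-cone estimate never uses that the apex lies in $D$, only that both points lie in $\overline{D_R}$ so that all characteristic feet stay in $[-R,R]$ (and indeed $\overline{K^+_R(x,t)}\subset\overline{D_R}$ by definition). The only cosmetic slip is the count of ``eight'' endpoints where there are four; it has no bearing on the argument.
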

\noindent {\bf Remark: } Of course $C$ depends also on initial data, but we omit that dependence, since we never change initial data in this setting. Note that since $(x,t)\in \overline{ D_R}$, it follows that $K^-_{ R}(x,t)=K^-(x,t)$.
\begin{proof} We will prove the first inequality, the second one can be proved by the same way. For more details see \cite{MR1854257} page $74$.\\
Let $R>0$, consider $(x,t)$ fixed in $D\cap\overline{D_R},$ and $(y,s)$ in $D\cap \overline{K^+_R(x,t)}$. We introduce the following change of variables: 
\begin{eqnarray}\label{21,5}
 \xi=(y-x)-(s-t),\,\eta=-(y-x)-(s-t),\,\bar u_n(\xi,\eta)=u_n(y,s).
\end{eqnarray}

From (\ref{16.5}), we see that $\bar u_n$ satisfies :
\begin{eqnarray}\label{xieta}
 \partial_{\xi \eta } \bar u_n(\xi,\eta)=\frac{1}{4}F_n(\bar u_n)\ge 0.
\end{eqnarray}
Let $(\bar \xi,\bar \eta)$ the new coordinates of $(y,s)$ in the new set of variables. Note that $\bar\xi\le0$ and $\bar\eta\le0$. We note that there exists $\xi_0\ge0$ and $\eta_0\ge0$ such that the points $(\xi_0,\bar \eta)$ and $(\bar \xi, \eta_0)$ lay on the horizontal line $\{s=0\}$ and have as original coordinates respectively $(y^*,0)$ and $(\tilde y,0)$ for some $y^*$ and $\tilde y$ in $[-R,R]$. We note also that in the new set of variables, we have :

\begin{eqnarray}\label{equadi1}
&u_n(y,s)-u_n(x,t)=\bar u_n(\bar \xi,\bar \eta)-\bar u_n(0,0)=\bar u_n(\bar \xi,\bar \eta)-\bar u_n(\bar \xi,0)+\bar u_n(\bar \xi,0)-\bar u_n(0,0)\notag \\ &=-\int^0_{\bar \eta} \partial_\eta \bar u_n (\bar \xi,\eta) d\eta-\int^0_{\bar \xi} \partial_\xi \bar u_n(\xi, 0)d\xi.
\end{eqnarray}

From (\ref{xieta}), $\partial_\eta \bar u_n$ is monotonic in $\xi$. So, for example for $\eta=\bar\eta$, as $\bar \xi\le0\le \xi_0$, we have :
$$ \partial_{\eta } \bar u_n(\bar \xi,\bar\eta)\le \partial_{\eta  } \bar u_n(0,\bar\eta)\le \partial_{\eta  } \bar u_n( \xi_0,\bar\eta).$$
 
Similarly, for any $\eta \in (\bar \eta, 0)$, we can bound from above the function $\partial_\eta \bar u_n(\bar\xi,\eta)$ by its value at
the point $(\xi^*(\eta),\eta)$, which is the projection  of
$(\bar\xi,\eta)$ on the axis $\{s=0\}$ in parallel to the axis $\xi$ (as $\bar\xi\le0\le\xi^*(\eta)$).

By the same way, from (\ref{xieta}), $\partial_\xi \bar u_n$ is monotonic in $\eta$.
As $\bar\eta\le0\le \eta_0$, we can bound, for $\xi\in(\bar \xi,0)$, $\partial_\xi \bar u_n(\xi,0)$ by its value at the point $(\xi,\eta^*(\xi))$, which is the projection of $(\xi,0)$
on the axis $\{s=0\}$ in parallel to the axis $\eta$ ($0<\eta^*(\xi)$).
So it follows that:

\begin{equation}\label{equadi2}
\left.
\begin{array}{l}
 \partial_{\eta } \bar u_n(\bar \xi,\eta)\le \partial_{\eta  } \bar u_n( \xi^*(\eta),\eta),\;\forall \eta \in (\bar \eta, 0)\\
 \partial_{\xi} \bar u_n(\xi,0)\le \partial_{\xi} \bar u_n(\xi,\eta^*(\xi)),\;\forall \xi \in (\bar \xi, 0)
\end{array}
\right .
\end{equation}

By a straightforward geometrical construction, we see that the coordinates of $(\xi^*(\eta),\eta)$ and $(\xi,\eta^*(\xi))$, 
in the original set of variables $\{y,s\}$, are respectively $(x+t-\eta\sqrt{2},0)$ and $(x-t+\eta\sqrt{2},0)$. Both points are in $[-R,R]$.

Furthermore, we have from (\ref{21,5}):
\begin{align}
\left.
\begin{array}{l}\label{equadi3}
\partial_{\eta  } \bar u_n( \xi^*(\eta),\eta)=\frac{1}{2}(-\partial_t u_n-\partial_x u_n) (x+t-\eta\sqrt{2},0)=\frac{1}{2}(-u_{1,n}-\partial_x u_{0,n})(x+t-\eta\sqrt{2}),\\
\partial_{\xi} \bar u_n(\xi,\eta^*(\xi))=\frac{1}{2}(-\partial_t u_n+\partial_x u_n) (x-t+\eta\sqrt{2},0)=\frac{1}{2}(-u_{1,n}+\partial_x u_{0,n})(x-t+\eta\sqrt{2}).
\end{array}
\right .
\end{align}

Using (\ref{equadi3}), the Cauchy-Schwarz inequality and the fact that $u_{1,n}$ and $\partial_x u_{0,n}$ are uniformly bounded  in $  L^2(-R,R)$ since they are convergent, we have:
\begin{equation}\label{equadi4}
\left.
\begin{array}{l}
 \int_{\bar \eta}^0  \partial_{\eta  } \bar u_n( \xi^*(\eta),\eta)d\eta\le C(R),\\
 \int_{\bar\xi}^0  \partial_{\xi} \bar u_n(\xi,\eta^*(\xi))d\xi\le C(R).
\end{array}
\right .
\end{equation}

Using (\ref{equadi1}), (\ref{equadi2}) and (\ref{equadi4}), we reach to conclusion of Lemma \ref{lab}.

\end{proof}

Let us show the following:
\begin{lem}{\bf (Convergence of $u_n$ as $n\rightarrow \infty$)}\label{xt}
 Consider $(x,t)\in \mathbb{R}\times [0,+\infty)$. We have the following:
\begin{itemize}
 \item  [$-$] If $t>T(x)$, then $u_n(x,t)\rightarrow +\infty$,
 \item  [$-$] If $t<T(x)$, then $u_n(x,t)\rightarrow u(x,t)$.
\end{itemize}
\end{lem}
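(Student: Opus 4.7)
Fix such $x$ and $t$. Since $\overline{K^-(x,t)} \subset D$, the solution $u$ is continuous and bounded on this compact set by some constant $M$. My plan is to apply the local Cauchy argument of Step 1 iteratively along horizontal slabs of the cone. On each slab $[\tau, \tau + \delta]$, the fixed-point contraction from Lemma \ref{a1} gives $u_n \to u$ provided (i) the approximate data $(u_n(\tau), \partial_t u_n(\tau)) \to (u(\tau), \partial_t u(\tau))$ in $H^1 \times L^2$ on the relevant cross-section, and (ii) $\delta$ is smaller than a threshold depending only on the local $H$-norm of $(u(\tau), \partial_t u(\tau))$, which stays bounded for $\tau \in [0, t]$ by finite speed of propagation. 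Once $u_n$ is trapped in $[-M-1, M+1]$, the truncated nonlinearity $F_n$ agrees with $e^{\,\cdot}$ for $n > M + 1$, so the argument reduces to the untruncated Cauchy theory, and finitely many iterations yield $u_n(x, t) \to u(x, t)$.

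\noindent\textbf{Case $t > T(x)$.} I argue by contradiction. Suppose that, along some subsequence, $u_n(x, t) \le M$. Applying Lemma \ref{lab} at the apex $(x, t)$ yields the uniform upper bound $u_n(y, s) \le M + C(R)$ throughout the whole backward cone $\overline{K^-(x, t)}$; consequently $F_n(u_n) \le e^{M + C(R)}$ is uniformly bounded, and standard energy estimates in the cone produce uniform $H^1$ bounds on $u_n$ on horizontal cross-sections. The one-dimensional Sobolev embedding $H^1 \hookrightarrow C^0$ upgrades this to local uniform convergence of a further subsequence to some $\tilde u$ solving $\partial_t^2 \tilde u - \partial_x^2 \tilde u = e^{\tilde u}$ in the cone (once $n$ exceeds $M + C(R) + 1$ the truncation is irrelevant). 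By Case 1 together with uniqueness, $\tilde u$ coincides with $u$ on $D \cap K^-(x, t)$ and hence extends $u$ to the full open cone $K^-(x, t)$. Because this cone contains a neighborhood of points above the blow-up graph $\Gamma$ near $x$, this contradicts the maximality of $D$ established in Step 2, so $u_n(x,t) \to +\infty$.

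\noindent\textbf{Main obstacle.} The delicate step is the compactness argument in Case 2: one must know that the a priori $L^\infty$ bound from Lemma \ref{lab} combines with the energy identity to yield enough strong convergence to pass to the limit in the nonlinearity $F_n(u_n)$. In one space dimension this goes through cleanly via $H^1 \hookrightarrow C^0$; the purely geometric observation that $K^-(x, t)$ pokes above $\Gamma$ as soon as $t > T(x)$ is what then forces the contradiction with maximality of the domain $D$.
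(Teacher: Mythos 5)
Your proof has the same skeleton as the paper's: for $t<T(x)$ you deactivate the truncation on the compact cone $\overline{K^-(x,t)}\subset D$ and reduce to the untruncated Cauchy theory with convergent data, and for $t>T(x)$ you use Lemma \ref{lab} to spread a hypothetical bound at the apex over the whole backward cone and contradict the definition of $T(x)$. The execution differs in both cases, though. In Case 1 the paper introduces the auxiliary solutions $v_n$ of the \emph{untruncated} equation with data $(u_{0,n},u_{1,n})$, which converge to $u$ on compacts of $D$, and then runs a first-exit-time argument (the time $\tilde t_n$ where $\|u_n\|_{L^\infty}$ would first reach $\tilde M+2$) to show $u_n=v_n$ on the cone; your slab-by-slab iteration is a legitimate substitute, but as written the trapping of $u_n$ in $[-M-1,M+1]$ is asserted before it is derived — you should either note that the fixed point of the truncated problem already lives in the ball $B_E(0,R)$ with $R$ controlled uniformly by the data norm (so $F_n$ agrees with $e^{\,\cdot}$ on its range once $n>C_*R$), or run the paper's continuity argument; otherwise the step is circular. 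In Case 2 the paper avoids your compactness machinery entirely: once $F_n(u_n)=e^{u_n}$ on $\overline{K^-(x,t)}$ for $n$ large, it invokes finite speed of propagation and continuous dependence on initial data to conclude directly that $u$ itself is defined on the cone, whereas you extract a limit $\tilde u$ via energy bounds and $H^1\hookrightarrow C^0$ and then identify it with $u$ using Case 1 and uniqueness. Your route works but is heavier, and the equicontinuity in time needed for the Arzel\`a--Ascoli step deserves a word; the paper's appeal to the already-established Cauchy theory is cleaner. The geometric endgame (the open cone $K^-(x,t)$ contains points above $\Gamma$, contradicting the construction of $D$ as a union of backward light cones) is identical in both arguments.
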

\begin{proof} We claim that it is enough to show the convergence for a subsequence. Indeed, this is clear from the fact that the limit is explicit and doesn't depend on the subsequence. Consider $(x,t)\in \mathbb{R}\times [0,+\infty),$ up to extracting a subsequence, there is $l(x,t)\in  \mathbb{\overline R}$ such that $u_n(x,t)\rightarrow l(x,t)$ as $n\rightarrow \infty$.\\
Let us show that $l\neq -\infty$. Since $F_n(u)\ge 0$, it follows that $u_n(x,t)\ge  \underline u_n(x,t)$ where 
\begin{equation}\label{28.5} \left\{
\begin{array}{l}
\displaystyle\partial^2_{t} \underline u_n = \partial^2_{x}\underline  u_n, \\
\underline u_n(0)=u_{0,n} \mbox{ and } \partial_t \underline u_n(0) = u_{1,n}.
\end{array}
\right . 
\end{equation}
Since $\underline u_n\in L^\infty_{loc}(\mathbb{R}^+,H^1(-R,R))\subset L^\infty_{loc}(\mathbb{R}^+,L^\infty(-R,R))$, for any $R>0$, 
from the fact that $(u_{0,n},u_{1,n})$ is convergent in $H^1_{loc}\times L^2_{loc}$, it follows that $l(x,t)\ge \limsup_{n\rightarrow +\infty}\underline u_n(x,t)>-\infty$.\\

Note from the fact that $F_n(u)\le e^u$ that we have
\begin{eqnarray}\label{15}
 \forall x\in \mathbb{R}, t< T(x),\,u_n(x,t)\le u(x,t).
\end{eqnarray}
Introducing $R=|x|+t+1$, we see by definition (\ref{6.5}) of $D_R$ that $(x,t)\in D_R$.
Let us handle two cases in the following:

\medskip
\noindent{\bf Case 1:} $t< T(x)$\\
Let us introduce $v_n$ the solution of 
\begin{equation*} \left\{
\begin{array}{l}
\displaystyle\partial^2_{t} v_n = \partial^2_{x}v_n+e^{v_n}, \\
v_n(0)=u_{0,n} \mbox{ and } \partial_t v_n(0) = u_{1,n} \in H^1_{loc,u}\times L^2_{loc,u}.
\end{array}
\right . 
\end{equation*}
From the local Cauchy theory in $ H^1_{loc,u}\times L^2_{loc,u}$ and the Sobolev embedding, we know that
\begin{eqnarray}\label{triangle1}
 v_n\rightarrow u \mbox{ uniformly as } n\rightarrow \infty \mbox{ in compact sets of }D.
\end{eqnarray}

Let us consider
$$\tilde K=\overline{ K_-(x,(t+T(x))/2})$$
and $\tilde M=\max_{(y,s)\in \tilde K} |u(y,s)|<+\infty$, since $\tilde K$ is a compact set in $D$.\\
From (\ref{triangle1}), we may assume $n$ large enough, so that $||u_{0,n}-u_0||_{L^{\infty}(\tilde K\cap \{t=0\})}\le 1$,\\

\begin{equation}\label{5d}
 \sup_{(y,s)\in \tilde K} |v_n(y,s)|\le \tilde M+1
\end{equation}
and
\begin{eqnarray}\label{4d}
 n\ge \tilde M+3.
\end{eqnarray}
In particular,
\begin{eqnarray}\label{dd}
 ||u_{0,n}||_{L^{\infty}(\tilde K\cap \{t=0\})}\le \tilde M+1.
\end{eqnarray}
We claim that 
\begin{equation}\label{7d}
 \forall (y,s)\in \tilde K, |u_n(y,s)|\le \tilde M+2.
\end{equation}
Indeed, arguing by contradiction, we may assume from (\ref{dd}) and continuity of $u_n$ that
\begin{eqnarray}\label{3d}
 \forall s\in [0,\tilde t_n],\, ||u_n(s)||_{L^{\infty}(\tilde K\cap \{t=s\})}\le \tilde M+2,
\end{eqnarray}
and
\begin{eqnarray}\label{6d}
||u_n(\tilde t_n)||_{L^{\infty}(\tilde K\cap \{t=\tilde t_n\})}= \tilde M+2,
\end{eqnarray}
for some $\tilde t_n\in (0,\frac{t+T(x)}{2})$.\\
From (\ref{4d}), (\ref{3d}) and the definition (\ref{16.3}) of $F_n$, we see that
\begin{eqnarray*}
 \forall (y,s) \in \tilde K \mbox{ with }s\le \tilde t_n, F_n(u_n(y,s))=e^{u_n(y,s)}.
\end{eqnarray*}
Therefore, $u_n$ and $v_n$ satisfy the same equation with the same initial data on $\tilde K\cap \{s\le \tilde t_n\}$. From uniqueness of the solution to the Cauchy problem, we see that
\begin{eqnarray*}
 \forall (y,s) \in \tilde K \mbox{ with }s\le \tilde t_n, u_n(y,s)=v_n(y,s).
\end{eqnarray*}
A contradiction then follows from (\ref{5d}) and (\ref{6d}). Thus, (\ref{7d}) holds.\\

Again from the choice of $n$ in (\ref{4d}), we see that
\begin{eqnarray*}
 \forall (y,s) \in \tilde K , F_n(u_n(y,s))=e^{u_n(y,s)},
\end{eqnarray*}
hence, from uniqueness,
\begin{eqnarray*}
 \forall (y,s) \in \tilde K , u_n(y,s)=v_n(y,s).
\end{eqnarray*}

From (\ref{triangle1}), and since $(x,t)\in \tilde K$, it follows that $u_n(x,t)\rightarrow u(x,t)$ as $n\rightarrow \infty$.

\medskip
\noindent{\bf Case 2:} $t> T(x)$\\
Assume by contradiction that $l<+\infty.$ From Lemma \ref{lab}, it follows that
\begin{eqnarray*}
 \forall (y,s)\in \overline{K^-(x,t)},\, u_n(y,s)\le u_n(x,t)+C(R).
\end{eqnarray*}
For $n\ge n_0$ large enough, this gives $u_n(y,s)\le l+1+ C(R).$\\
If $M=E(l+1+C(R))+1,$ then 
$$\forall n\ge \max (M, n_0),\,\forall (y,s)\in \overline{K^-(x,t)}, F_n(u_n(y,s))=e^{u_n(y,s)},$$ and $u_n$ satisfies (\ref{waveq}) in $\overline{K^-(x,t)}$ with initial data $(u_{0,n},u_{1,n})\rightarrow (u_0,u_1)\in H^1\times L^2(K^-(x,t)\cap\{t=0\})$.
From the finite speed of propagation and the continuity of solutions to the Cauchy problem with respect of initial data, it follows that $u_n$ and $u$ are both defined in $K^-(x,t)$ for $n$ large enough, in particular $u$ is defined at $(x,s)$ with $T(x)<s<t$ with $u=u_n$ in $\overline{K^-(x,t)}$. Contradiction with the expression of the domain of definition (\ref{domaine-de-definition}) of $u$.
\end{proof}

\section{Energy estimates}\label{sec2}
In this section, we use some localized energy techniques from Shatah and Struwe \cite{MR1674843} to derive a non-blow-up criterion which will give the lower bound in Theorem \ref{th}. More precisely, we give the following:
\begin{prop}\emph{\bf(Non blow-up criterion for a semilinear wave equation)}\label{theor}

$\forall c_0>0$, there exist $M_0 (c_0)>0$ and $M (c_0)>0$ such that, if
\begin{equation} {\rm (H)}:\left\{
\begin{array}{l}
||\partial_x u_0||_{L^2(-1,1)}^{2}+||u_1||_{L^2(-1,1)}^{2}\leq c_0^2\\
\forall |x|<1,\; u_0(x)\leq M_0,
\end{array}
\right .\label{H}
\end{equation}
then equation (\ref{waveq}) with initial data $(u_0,u_1)$ has a unique solution $(u, \partial_t u)\in C([0, 1), H^1 \times L^2(|x|<1-t))$ such that for all $t \in [0, 1),$ we have:\\
\begin{equation}\label{p1}
 ||\partial_x u (t)||_{L^2(|x|<1-t)}^{2}+||\partial_t u (t)||_{L^2(|x|<1-t)}^{2}\leq 2 c_0^2 ,
\end{equation}
and\\
\begin{equation}\label{p2}
\forall |x|<1-t ,\; u(x, t)\leq M .
 \end{equation}
\end{prop}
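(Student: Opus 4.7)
The plan is a continuity/bootstrap argument built on the local Cauchy theory of Section~\ref{cauchy-Problem}. Fix $c_0>0$. After smoothly truncating the initial data to recover the $H^{1}_{loc,u}\times L^{2}_{loc,u}$ framework, Lemma~\ref{a1} yields a unique local solution on the cone $\{|x|<1-t\}$ up to a maximal time $T^\ast\le 1$. Let $T^{\ast\ast}\in(0,T^\ast]$ be the supremum of times $\tau$ such that a \emph{strict} version of (\ref{p1}) and (\ref{p2}) holds on $[0,\tau)$, say with $2c_0^2$ replaced by $\tfrac{3}{2}c_0^2$ and $M$ replaced by $M-1$. By continuity $T^{\ast\ast}>0$, and it suffices to show that, for an appropriate choice of $M_0(c_0)$ and $M(c_0)$, the strict inequalities propagate up to $T^{\ast\ast}$, forcing $T^{\ast\ast}=T^\ast=1$.

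On the bootstrap interval the estimate rests on two ingredients. The first is a Shatah--Struwe style localized energy identity: multiplying (\ref{waveq}) by $\partial_t u$ and integrating over $\{|x|<1-s\}$ gives, after an integration by parts,
\[
\frac{d\mathcal{F}}{ds} + \tfrac{1}{2}(\partial_t u-\partial_x u)^2\big|_{x=1-s} + \tfrac{1}{2}(\partial_t u+\partial_x u)^2\big|_{x=-(1-s)} = e^u\big|_{x=1-s}+e^u\big|_{x=-(1-s)} + \int_{|x|<1-s} e^u\,\partial_t u\,dx,
\]
where $\mathcal{F}(s):=\tfrac{1}{2}\bigl(\|\partial_x u(s)\|_{L^2}^2+\|\partial_t u(s)\|_{L^2}^2\bigr)$. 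Under the bootstrap $u\le M$, the boundary exponentials are at most $e^M$ and Cauchy--Schwarz bounds the bulk term by $e^M\sqrt{2(1-s)}\sqrt{2\mathcal{F}(s)}$; dropping the non-positive flux and integrating yields $\sqrt{\mathcal{F}(t)}\le c_0/\sqrt{2} + C\,e^M$ uniformly on $[0,1)$. The second ingredient is the d'Alembert representation valid inside the cone,
\[
u(x,t)=\tfrac{1}{2}\bigl(u_0(x+t)+u_0(x-t)\bigr)+\tfrac{1}{2}\int_{x-t}^{x+t}u_1(y)\,dy+\tfrac{1}{2}\int_0^t\!\int_{x-(t-s)}^{x+(t-s)}e^{u(y,s)}\,dy\,ds,
\]
which, combined with the hypotheses $u_0\le M_0$ and $\|u_1\|_{L^2}\le c_0$ for the linear part and the bootstrap $u\le M$ for the nonlinear part, gives $u(x,t)\le M_0+c_0/\sqrt{2} + C'e^M$.

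To close the bootstrap, I would first choose $M$ depending on $c_0$ so that $C\,e^M$ is a definite fraction of $c_0$ (yielding a strict improvement of (\ref{p1})), and then $M_0$ depending on $c_0$ and $M$ so that $M_0+c_0/\sqrt{2}+C'e^M<M-1$ (yielding a strict improvement of (\ref{p2})). The main obstacle is the focusing sign of the exponential nonlinearity: since $e^u$ grows rapidly with $M$, the natural energy is not monotone and the two estimates couple through $e^M$, so selecting \emph{positive} $M_0$ and $M$ (as required) simultaneously for every $c_0>0$ is the technical heart of the argument, requiring a careful balancing of the constants $C$ and $C'$ that appear in the two bounds.
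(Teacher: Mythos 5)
Your proposal is correct and follows essentially the same route as the paper: a continuity-in-time argument combining a Shatah--Struwe localized energy identity on the sections $\{|x|<1-t\}$ with the Duhamel representation on backward light cones, closed by first choosing $M$ so that $e^M$ is small compared with $c_0$ and then choosing $M_0$ (the paper takes $M=\log(c_0^2/16)$, $M_0=M-\sqrt{2}\,c_0-c_0^2/8$, and argues by contradiction at the first time one of the two bounds saturates rather than via a strict bootstrap, but this is the same mechanism). The ``technical heart'' you anticipate --- securing \emph{positive} $M_0$ and $M$ --- requires no delicate balancing of the constants $C$ and $C'$, since $M_0$ is chosen last and may be taken as negative as needed; indeed the paper's own explicit constants are not positive for general $c_0$, so that requirement in the statement should be read loosely.
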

\medskip

Note that here, we work in the space $ H^1_{loc} \times L^2_{loc}$ which is larger than the space $H^{1}_{loc,u}\times L^2_{loc,u}$ which is adopted elsewhere for equation (\ref{waveq}). Before giving the proof of this result, let us first give the following corollary, which is a direct consequence of Proposition \ref{theor}.
\begin{cor}\label{pro}
There exists $\bar\epsilon_0>0$ such that if
\begin{equation}\label{soleil}
\int_{-1}^{1} (u_1(x))^2+(\partial_x u_0(x))^2+e^{u_0(x)} \, dx \leq \bar\epsilon_0,
\end{equation}
then the solution $u$ of equation (\ref{waveq}) with initial data $(u_0,u_1)$ doesn't blow up in the cone $\mathcal{C}_{0,1,1}$.
\end{cor}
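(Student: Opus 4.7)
The plan is to reduce the corollary directly to Proposition~\ref{theor} by verifying hypothesis (H) for a suitable choice of $c_0$, the only nontrivial point being the conversion of the integral bound on $e^{u_0}$ into a pointwise upper bound on $u_0$. First I would fix $c_0 = 1$ (any positive value would work) and let $M_0 = M_0(c_0)$ and $M = M(c_0)$ be the constants provided by Proposition~\ref{theor}. The smallness hypothesis (\ref{soleil}) immediately gives
\[
\|\partial_x u_0\|_{L^2(-1,1)}^2 + \|u_1\|_{L^2(-1,1)}^2 \le \bar\epsilon_0,
\]
so the first line of (H) holds as soon as $\bar\epsilon_0 \le 1 = c_0^2$.

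The real work is to prove that if $\bar\epsilon_0$ is small enough, then the second line of (H) also holds, i.e., $u_0(x) \le M_0$ for all $|x|<1$. To do this I would combine two ingredients. First, Jensen's inequality applied with the convex function $e^{\,\cdot\,}$ and the probability measure $dx/2$ on $(-1,1)$ yields
\[
\exp\!\left(\tfrac{1}{2}\int_{-1}^{1} u_0(x)\,dx\right) \le \tfrac{1}{2}\int_{-1}^{1} e^{u_0(x)}\,dx \le \tfrac{\bar\epsilon_0}{2},
\]
which gives a sharp upper bound on the average $\bar u_0 := \tfrac{1}{2}\int_{-1}^{1} u_0$, namely $\bar u_0 \le \log(\bar\epsilon_0/2)$. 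Second, the one-dimensional Sobolev/Cauchy--Schwarz estimate
\[
|u_0(x) - u_0(y)| \le \sqrt{|x-y|}\,\|\partial_x u_0\|_{L^2(-1,1)} \le \sqrt{2\bar\epsilon_0}, \qquad x,y\in(-1,1),
\]
followed by averaging in $y$, yields $u_0(x) \le \bar u_0 + \sqrt{2\bar\epsilon_0}$ for every $|x|<1$. Combining the two bounds gives
\[
\sup_{|x|<1} u_0(x) \;\le\; \log(\bar\epsilon_0/2) + \sqrt{2\bar\epsilon_0},
\]
and the right-hand side tends to $-\infty$ as $\bar\epsilon_0 \to 0^+$ because the logarithm dominates.

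It then suffices to choose $\bar\epsilon_0 \in (0,1]$ small enough that $\log(\bar\epsilon_0/2) + \sqrt{2\bar\epsilon_0} \le M_0$. With this choice, hypothesis (H) of Proposition~\ref{theor} is satisfied, and the proposition produces a unique solution $(u,\partial_t u)\in C([0,1),H^1\times L^2(|x|<1-t))$ with uniform bounds (\ref{p1}) and (\ref{p2}). The pointwise bound $u(x,t) \le M$ and the uniform $H^1\times L^2$ control prevent the nonlinearity $e^{u}$ from producing a singularity, so the solution extends up to the apex of the cone; in particular it does not blow up inside $\mathcal{C}_{0,1,1}$. The main obstacle is really the passage from the integral control of $e^{u_0}$ to a pointwise upper bound, which the Jensen plus Sobolev argument above handles cleanly, and after that the corollary follows just by tuning $\bar\epsilon_0$ to the value of $M_0$ provided by the proposition.
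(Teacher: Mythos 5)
Your proposal is correct and follows essentially the same route as the paper: reduce to Proposition \ref{theor} with $c_0=1$ and convert the integral bound on $e^{u_0}$ into the pointwise bound $u_0\le \log(\bar\epsilon_0/2)+\sqrt{2\bar\epsilon_0}\le M_0$ via Cauchy--Schwarz on $\partial_x u_0$. The only cosmetic difference is that you use Jensen's inequality to control the average of $u_0$, where the paper picks a single point $x_0$ with $2e^{u_0(x_0)}\le\bar\epsilon_0$ by the mean value theorem; both yield the same final estimate.
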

Let us first derive Corollary \ref{pro} from Proposition \ref{theor}.
\begin{proof}[Proof of Corollary \ref{pro} assuming that Proposition \ref{theor} holds]:
 From (\ref{soleil}), if $\bar \epsilon_0\le 1$ we see that
 \begin{align} 
 \int_{-1}^1\left((u_1(x))^2+(\partial_x u_0(x))^2\right) dx &\le \bar \epsilon_0\le 1,\label{solei}\\ \notag
\int_{-1}^1 e^{u_0(x)}dx &\le \bar \epsilon_0.
\end{align}
Therefore, for some $x_0\in (-1,1)$, we have $2e^{u_0(x_0)}=\int_{-1}^1 e^{u_0(x)} dx \le \bar \epsilon_0$, hence $u_0(x_0)\le \log\frac{\bar \epsilon_0}{2}$.
Using (\ref{solei}), we see that for all $x\in (-1,1),$
\begin{align*}
 u_0(x)=u_0(x_0)+\int_{x_0}^x \partial_x u_0\le u_0(x_0)+\sqrt{2}\left( \int_{-1}^1 (\partial_x u_0(x))^2dx\right)^\frac{1}{2}\le \log\frac{\bar \epsilon_0}{2}+\sqrt{2\bar \epsilon_0}\le M_0(1),
\end{align*}
defined in Proposition \ref{theor}, provided that $\bar \epsilon_0$ is small enough. Therefore, the hypothesis (H) of Proposition \ref{theor} holds with $c_0=1$, and so does its conclusion. This concludes the proof of Corollary \ref{pro}, assuming that Proposition \ref{theor} holds.
\end{proof}
Now, we give the proof of Proposition \ref{theor}.
\begin{proof}[Proof of Proposition \ref{theor}]
Consider $c_0>0$ and introduce
\begin{align*}
 M_0=\log(\frac{c_0^2}{16})-c_0\sqrt{2}-\frac{c_0^2}{8}\mbox{ and }M(c_0)=\log(\frac{c_0^2}{16}).
\end{align*}
Then, we consider $(u_0,u_1)$ satisfying hypothesis (H). From the solution of the Cauchy problem in $H^1_{loc}\times L^2_{loc}$, which follows exactly by the same argument as in the space $H^{1}_{loc,u}\times L^2_{loc,u}$ presented in Section \ref{cauchy-Problem}, there exists $t^*\in (0,1]$ such that equation (\ref{waveq}) has a unique solution with $(u,\partial_t u)\in C([0,t^*),H^1\times L^2(|x|<1-t))$. Our aim is to show that $t^*=1$ and that (\ref{p1}) and (\ref{p2}) hold for all $t\in [0,1)$.

Clearly, from the solution of the Cauchy problem, it is enough to show that (\ref{p1}) and (\ref{p2}) hold for all $t\in [0,t^*)$, so we only do that in the following:

Arguing by contradiction, we assume that there exists at least some time $t \in [0, t^*)$ such that either (\ref{p1}) or (\ref{p2}) doesn't hold. If $\bar t$ is the lowest possible $t$, then, we have from continuity,
either 
$$||\partial_x u (\bar t)||_{L^2(|x|<1-\bar t)}^{2}+||\partial_t u (\bar t)||_{L^2(|x|<1-t)}^{2} =2 c_0,$$
or
$$\exists |x_0|< 1-\bar t,\mbox{such that } u(x_0,\bar t)= M. $$
Note that since (\ref{p1}) holds for all $t\in [0,\bar t)$, it follows that
\begin{eqnarray}\label{to}
 \forall t \in [0,\bar t),\forall |x|<1-t, u(x,t)\le M=\log(\frac{c_0^2}{16}).
\end{eqnarray}

Following the alternative on $\bar t$, two cases arise in the following:

\medskip
\noindent \emph{\bf{Case 1}}: $||\partial_x u (\bar t)||_{L^2(|x|<1-\bar t)}^{2}+||\partial_t u (\bar t)||_{L^2(|x|<1-\bar t)}^{2} = 2c_0^2 $.
\\
Referring to Shatah and Struwe \cite{MR1674843}, we see that:
\begin{eqnarray}\label{equashatah}
\int_{|x|<1-\bar t}(\frac{1}{2}(\partial_x u^2+\partial_t u^2)-e^u )\; dx&-&
\int_{|x|<1}(\frac{1}{2}(\partial_x u_0^2+ u_1^2)-e^{u_0}) \; dx \notag\\ &=&\int_{\Gamma} (e^u-\frac{1}{2} |\partial_x u-\frac{x}{|x|}\partial_t u |^2)\; d\sigma,
\end{eqnarray}
where $$\Gamma = \{ (x, t) \in \mathbb{R}\times \mathbb{R}_+,\; \mbox{such that } |x|=1-t\} \cap [0,\bar t] $$
Using (\ref{to}), it follows that
$$\int_{|x|<1-\bar t}e^{u(x,\bar t)}dx\le \int_{|x|<1-\bar t}e^M\le \frac{c_0^2}{8}.$$
$$\int_{\Gamma}e^ud\sigma\le \int_0^{\bar t}(e^{u(1-t,t)}+e^{u(t-1,t)})dt\le\frac{c_0^2}{8}.$$
Therefore, from (\ref{equashatah}) and (\ref{H}), we write
\begin{align*}
 \int_{|x|<1-\bar t}(\partial_x u)^2+(\partial_t u)^2 dx & \le  \int_{|x|<1}(\partial_x u_0)^2+(u_1)^2+ \int_{|x|<1-\bar t}e^{u(x,\bar t)}dx+\int_{\Gamma}e^u d\sigma\\
&\le c_0^2+\frac{3}{8}c_0^2<2c_0^2,
\end{align*}

which is a contradiction.\\
\medskip

\noindent \emph{\bf{Case 2}}:\;$\exists x_0\in (- (1-\bar t),1-\bar t),\, u(x_0,\bar t)= M. $\\
\noindent Recall Duhamel's formula:
\begin{eqnarray}\label{duhamel1}
 \forall |x|<1-\bar t,\notag\\
u(x, \bar t)&=&\frac{1}{2}(  u_0(x-\bar t)+ u_0(x+\bar t)+ \frac{1}{2}\int_{x-\bar t}^{x+\bar t} u_1 \notag\\&+&\frac{1}{2}\int_0^{\bar t}\int_{x-\bar t+\tau}^{x+\bar t-\tau} e^{u(z,\tau)}\;dz d\tau.
\end{eqnarray}
From (H), we write,
$$\int_{x-\bar t}^{x+\bar t} u_1 \; dx \leq\left(\int_{-1}^1 u_1^2\right)^\frac{1}{2} 2 \sqrt{2} \le c_0\sqrt{2} . $$
From (\ref{to}), we write
\begin{eqnarray*}
 \int_0^{\bar t}\int_{z-\bar t+\tau}^{z+\bar t -\tau}e^{u(z,\tau)}dz d\tau\le \int_0^{\bar t}\int_{z-\bar t+\tau}^{z+\bar t -\tau}\frac{c_0^2}{16}\le \frac{c_0^2}{8}.
\end{eqnarray*}
Since $u_0(x\pm \bar t)\le M_0=\log(\frac{c_0^2}{16})-c_0\sqrt{2}-\frac{c_0^2}{8}$, it follows from (\ref{duhamel1}) that
$$u(x,\bar t)\le M_0+ c_0 \frac{\sqrt{2}}{2}+\frac{c_0^2}{16}<\log(\frac{c_0^2}{16})=M,$$
and a contradiction follows.

This concludes the proof of Proposition \ref{theor}. Since we have already derived Corollary \ref{pro} from Proposition \ref{theor}, this is also the conclusion of the proof of Corollary \ref{pro}.

\end{proof}
\section {ODE type estimates }\label{sec3}
In this section, we extend the work of Godin in \cite{MR1854257}. In fact, we show that his estimates holds for more general initial data. As in the introduction, we consider $u(x,t)$ a non global solution of equation (\ref{waveq}) with initial data $(u_0,u_1)\in H^{1}_{loc,u}\times L^{2}_{loc,u}$.
This section is organized as follows:\\
-In the first subsection, we give some preliminary results and we show that the solution goes to $+\infty$ on the graph $ \Gamma$.\\
-In the second subsection, we give and prove upper and lower bounds on the blow-up rate.
\subsection{Preliminaries}

In this subsection, we first give some geometrical estimates on the blow-up curve (see Lemmas \ref{co}, \ref{so} and \ref{fo}). Then, we use equation (\ref{waveq}) to derive a kind of maximum principle in light cones (see Lemma \ref{3.2}), then, a lower bound on the blow-up rate (see Proposition \ref{ll}).
\medskip

We first we give the following geometrical property concerning the distance to \\$\{t=T(x)\},$ the boundary of the domain of definition of $u(x,t)$.
\begin{lem} {\bf (Estimate for the distance to the blow-up boundary).}\label{co} For all $(x,t)\in D$, we have
\begin{eqnarray}
   \frac{1}{\sqrt2}(T(x)-t)\le d((x,t),\Gamma) \le T(x)-t,
\end{eqnarray}
where $d((x,t),\Gamma)$ is the distance from $(x,t)$ to $\Gamma$. 
\end{lem}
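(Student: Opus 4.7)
The upper bound is essentially free: the point $(x,T(x))$ lies on $\Gamma$ by definition, and its Euclidean distance from $(x,t)$ is exactly $T(x)-t$. So the first step of my plan is just to exhibit this competitor and conclude $d((x,t),\Gamma)\le T(x)-t$.

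For the lower bound, the key resource is that $T$ is $1$-Lipschitz (a consequence of finite speed of propagation, as already noted after \eqref{domaine-de-definition}). The plan is to pick an arbitrary point $(\xi,T(\xi))\in\Gamma$ and estimate $(x-\xi)^2+(T(\xi)-t)^2$ from below, uniformly in $\xi$. Setting $a=T(x)-t>0$ and $r=|x-\xi|$, the Lipschitz property gives $T(\xi)\ge T(x)-r$, hence $T(\xi)-t\ge a-r$. I split into two cases.

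If $r\ge a$, then $(x-\xi)^2+(T(\xi)-t)^2\ge r^2\ge a^2$, which is more than enough. If $0\le r\le a$, then $T(\xi)-t\ge a-r\ge 0$, so
\[
(x-\xi)^2+(T(\xi)-t)^2\ \ge\ r^2+(a-r)^2\ =\ 2\Bigl(r-\tfrac{a}{2}\Bigr)^2+\tfrac{a^2}{2}\ \ge\ \tfrac{a^2}{2}.
\]
Taking the infimum over $\xi\in\mathbb{R}$ yields $d((x,t),\Gamma)\ge a/\sqrt{2}=(T(x)-t)/\sqrt{2}$, which is the desired bound.

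There is no real obstacle here; the only ``content'' is the elementary quadratic minimization $r^2+(a-r)^2\ge a^2/2$ with minimum at $r=a/2$ (geometrically, this is the worst case: when $\Gamma$ descends from $(x,T(x))$ along a $45^\circ$ line, the closest point on it to $(x,t)$ is the foot of the perpendicular, at distance $a/\sqrt 2$). Both cases together give the claimed two-sided bound, with equality achievable in the limit only when $\Gamma$ is actually a characteristic line through a neighborhood of $(x,T(x))$.
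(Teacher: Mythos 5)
Your proof is correct and follows essentially the same route as the paper: the upper bound via the competitor $(x,T(x))$, and the lower bound via the $1$-Lipschitz property of $T$, which forces $\Gamma$ to stay above the backward light cone of slope one through $(x,T(x))$. The paper simply states that $d((x,t),\mathcal{C}_{x,T(x),1})=\frac{\sqrt{2}}{2}(T(x)-t)$ as a geometric fact, whereas you carry out the underlying quadratic minimization explicitly; the content is identical.
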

\begin{proof}
Note first by definition that
$$d((x,t),\Gamma)\le d((x,t),(x,T(x))=T(x)-t.$$
Then, from the finite speed of propagation, $\Gamma$ is above $\mathcal{C}_{x,T(x),1}$, the backward light cone with vertex $(x,T(x))$. Since $(x,t)\in\mathcal{C}_{x,T(x),1}$, it follows that
$$d((x,t),\Gamma)\ge d((x,t),\mathcal{C}_{x,T(x),1})=\frac{\sqrt{2}}{2}(T(x)-t).$$
This concludes the proof of Lemma \ref{co}.
\end{proof}
Then, we give a geometrical property concerning distances, specific for non-characteristic points.
\begin{lem} {\bf (A geometrical property for non-characteristic points)}\label{so}
 Let $a \in \mathcal{R}$. There exists $c:=C(\delta)$, where $\delta=\delta(a)$ is given by (\ref{4}), such that for all $(x,t)\in\mathcal{C}_{a,T(a),1}$, 
\begin{align*}
 \frac{1}{c}\le \frac{T(x)-t}{T(a)-t}\le c.
\end{align*}
\end{lem}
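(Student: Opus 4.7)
The plan is to translate the two defining properties of a non-characteristic point into direct inequalities on $T(x) - t$. On one side, the very definition (\ref{4}) of $\mathcal{R}$ says that $u$ is defined on $\mathcal{C}_{a,T(a),\delta_0}$, and since $D=\{(x,t): t<T(x)\}$, this immediately forces $\Gamma$ to lie above the slanted cone through $(a,T(a))$ of slope $1/\delta_0$. On the other side, the finite speed of propagation already gave us that $T$ is $1$-Lipschitz, which is an upper bound on how fast $T$ can grow away from $a$.

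First I would record the geometric consequence of (\ref{4}): for every $\xi$ with $T(a)-\delta_0|\xi-a|\ge 0$, the point $(\xi,T(a)-\delta_0|\xi-a|)$ lies in $\mathcal{C}_{a,T(a),\delta_0}\subset D$, so
\[
T(\xi)\ \ge\ T(a)-\delta_0|\xi-a|.
\]
Then I would pair this with the 1-Lipschitz estimate
\[
T(\xi)\ \le\ T(a)+|\xi-a|.
\]

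Next, I take $(x,t)\in\mathcal{C}_{a,T(a),1}$, so by definition of this cone $|x-a|\le T(a)-t$ (and in particular $T(a)-t>0$ as soon as $(x,t)\ne(a,T(a))$, which also ensures $T(a)-\delta_0|x-a|\ge 0$ so the first bound applies). The upper bound then gives
\[
T(x)-t\ \le\ T(a)-t+|x-a|\ \le\ 2(T(a)-t),
\]
and the lower bound gives
\[
T(x)-t\ \ge\ (T(a)-t)-\delta_0|x-a|\ \ge\ (1-\delta_0)(T(a)-t).
\]
Since $\delta_0\in(0,1)$, both factors are strictly positive, so taking $c:=\max\bigl(2,\,1/(1-\delta_0)\bigr)$ yields the desired two-sided bound, with $c$ depending only on $\delta=\delta(a)$.

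There is really no serious obstacle here: the statement is purely geometric about the blow-up curve $\Gamma$, and both inequalities drop out of the 1-Lipschitz property together with the non-characteristic condition (\ref{4}). The only small care is to verify that $(x,t)\ne(a,T(a))$ forces $T(a)-t>0$ throughout $\mathcal{C}_{a,T(a),1}$ so that the ratio is well defined, and that the inequality $|x-a|\le T(a)-t$ (built into the definition of $\mathcal{C}_{a,T(a),1}$) is used in both directions to turn $|x-a|$ into a multiple of $T(a)-t$.
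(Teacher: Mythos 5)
Your argument is exactly the paper's own: the lower bound $T(x)-t\ge(1-\delta)(T(a)-t)$ from the fact that $\Gamma$ lies above the cone $\mathcal{C}_{a,T(a),\delta}$, and the upper bound $T(x)-t\le 2(T(a)-t)$ from the $1$-Lipschitz property of $T$ combined with $|x-a|\le T(a)-t$. The proposal is correct and matches the paper's proof step for step, with the explicit choice $c=\max(2,1/(1-\delta))$ making the constant precise.
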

\noindent{\bf Remark}: From Lemma \ref{co}, it follows that
$$\frac{1}{\bar c}\le \frac{d((x,t),\Gamma)}{d((a,t),\Gamma)}\le \bar c$$
whenever $a\in \mathcal{R}$ and $(x,t)\in\mathcal{C}_{a,T(a),1}$.
\begin{proof}
Let $a$ be a non-characteristic point. We recall form condition (\ref{4}) that
\begin{equation*}
 \exists \,\delta = \delta (a) \in (0,1) \mbox{ such that } u \mbox{ is defined on }\mathcal{C}_{a,T(a), \delta}.
\end{equation*}
 Let $(x,t)$ be in the light cone with vertex $(a,T(a))$. Using the fact that the blow-up graph is above the cone $\mathcal{C}_{a,T(a), \delta}$ and the fact that $(x,t)\in  \mathcal{C}_{a,T(a), 1},$ we see that
\begin{align}\label{soo}
 T(x)-t\ge T(a)-\delta|x-a|-t\ge (T(a)-t)(1-\delta).
\end{align}
In addition, as $\Gamma$ is a 1-Lipchitz graph, we have
$$T(x)\le T(a)+|x-a|,$$
so, for all $(x,t)\in  \mathcal{C}_{a,T(a), 1}$
\begin{align}\label{sooo}
 T(x)-t\le T(a)+|x-a|-t\le 2(T(a)-t)
\end{align}
From (\ref{soo}) and (\ref{sooo}), there exists $c=c(\delta)$ such that
\begin{align*}
\frac{1}{c}\le \frac{T(x)-t}{T(a)-t}\le c.
\end{align*}
This concludes the proof of Lemma \ref{so}.
\end{proof}
Finally, we give the following coercivity estimate on the distance to the blow-up curve, still specific for non-characteristic points.
\begin{lem}\label{fo}
Let $x\in \mathcal{R}$ and $t\in [0,T(x))$. For all $\tau \in [0,t)$ and $j=1,2$ we have
\begin{eqnarray}\label{c}
d((z_j,w_j),\Gamma)\ge \frac{1}{C} (d(( x, t),\Gamma)+|( x, t)-(z_j,w_j)|),
\end{eqnarray}
where $(z_1,w_1)=( x+ t-\tau,\tau)$ and $(z_2,w_2)=( x- t+\tau,\tau)$.
\end{lem}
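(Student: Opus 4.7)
The plan is to reduce everything to the quantities $T(\cdot)-(\cdot)$ via Lemma \ref{co}, and then to exploit the non-characteristic hypothesis through Lemma \ref{so}.

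First I would verify that the two auxiliary points $(z_j,w_j)$ both lie inside the backward light cone $\mathcal{C}_{x,T(x),1}$. Indeed, by construction $|z_j-x|=t-\tau$ and $w_j=\tau$, so $|z_j-x|=t-\tau\le T(x)-\tau$, the last inequality being equivalent to $t\le T(x)$, which holds by hypothesis. In particular both $(z_j,w_j)$ sit in the cone where Lemma \ref{so} is applicable.

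Next, since $x\in\mathcal{R}$, Lemma \ref{so} (applied with $a=x$) gives a constant $c=c(\delta(x))$ such that
\begin{equation*}
T(z_j)-\tau\;\ge\;\frac{1}{c}\bigl(T(x)-\tau\bigr)\;=\;\frac{1}{c}\bigl((T(x)-t)+(t-\tau)\bigr).
\end{equation*}
Combining this with the lower bound $d((z_j,w_j),\Gamma)\ge\frac{1}{\sqrt 2}(T(z_j)-\tau)$ from Lemma \ref{co}, and with the upper bound $d((x,t),\Gamma)\le T(x)-t$ from the same lemma, yields
\begin{equation*}
d((z_j,w_j),\Gamma)\;\ge\;\frac{1}{\sqrt 2\,c}\bigl(d((x,t),\Gamma)+(t-\tau)\bigr).
\end{equation*}

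Finally, a direct computation gives $|(x,t)-(z_j,w_j)|=\sqrt{(z_j-x)^2+(t-\tau)^2}=\sqrt 2\,(t-\tau)$, so $t-\tau=\frac{1}{\sqrt 2}|(x,t)-(z_j,w_j)|$. Substituting this into the previous display produces the desired estimate with $C=2c(\delta(x))$. There is no real obstacle here: once one notices that $(z_j,w_j)\in\mathcal{C}_{x,T(x),1}$, the result is just a mechanical combination of Lemmas \ref{co} and \ref{so} together with the identity $|(x,t)-(z_j,w_j)|=\sqrt{2}(t-\tau)$; the only point worth stating carefully is that the constant $C$ depends on the non-characteristic parameter $\delta(x)$.
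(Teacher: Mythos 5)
Your proposal is correct, and it takes a genuinely different (and slicker) route than the paper. The paper argues directly from the geometry of the non-characteristic cone: it introduces the points $M=(x,T(x))$, $M_0=(x,t)$, $M_1=(z_1,w_1)$, projects $M_1$ orthogonally onto the boundary of $\mathcal{C}_{x,T(x),\delta}$ to get a point $N_1$, and decomposes $M_1N_1=N_1P_1+P_1M_1$ with $N_1P_1=\cos\alpha\,(T(x)-t)$ and $P_1M_1=\cos\bigl(\tfrac{\pi}{4}+\alpha\bigr)\,|M_0M_1|$, where $\tan\alpha=\delta$; the lower bound $d((z_1,w_1),\Gamma)\ge M_1N_1$ then comes from $\Gamma$ lying above that cone. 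You instead observe that all of this geometry is already packaged in Lemmas \ref{co} and \ref{so}: since $(z_j,w_j)\in\mathcal{C}_{x,T(x),1}$ (your verification $t\le T(x)$ is the right one), Lemma \ref{so} gives $T(z_j)-\tau\ge\frac1c\bigl((T(x)-t)+(t-\tau)\bigr)$, and the two-sided comparison of $d(\cdot,\Gamma)$ with $T(\cdot)-\,\cdot$ from Lemma \ref{co}, together with $|(x,t)-(z_j,w_j)|=\sqrt2\,(t-\tau)$, finishes the argument with $C=2c$. Your route is shorter, avoids the figure, and makes the logical dependence on the non-characteristic hypothesis explicit (it enters only through Lemma \ref{so}); the paper's route is self-contained and yields explicit constants $\cos\alpha$ and $\cos(\tfrac{\pi}{4}+\alpha)$ in terms of $\delta$. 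The one point you pass over quickly --- that $(z_j,w_j)\in D$ so that Lemma \ref{co} applies there --- is immediate from the inequality $T(z_j)-\tau\ge\frac1c(T(x)-\tau)>0$ you have just derived, so there is no gap.
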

{\bf Remark: } Note that $(z_j,w_j)$ for $j=1,2$ lay on the backward light cone with vertex $(x,t)$.
\begin{proof}
Consider $x\in\mathcal{R}$, $t\in[0,T(x))$. By definition, there exists $\delta\in(0,1)$ such that $\mathcal{C}_{x,T(x),\delta}\subset D$. We will prove the estimate for $j=1$ and $\tau\in [0,t)$, since the the estimate for $j=2$ follows by symmetry. In order to do so, we introduce the following notations, as illustrated in figure 1:
$M=(x,T(x))$, $M_0=(x,t)$ and $M_1=(z_1,w_1)=( x+ t-\tau,\tau)$, which is on the left boundary of the backward light cone $\mathcal{C}_{x,t,1}$,
$N_1$ the orthogonal projection of $M_1$ on the left boundary of the cone $\mathcal{C}_{x,T(x),\delta}$, $P_1$ the orthogonal projection of $M_0$ on $[N_1,M_1]$. Note that the quadrangle $M_0N_0N_1P_1$ is a rectangle. If $\alpha$ is such that $\tan \alpha=\delta$ and $\beta=\widehat{PM_1M_0}$, then we see from elementary considerations on angles that $\beta=\alpha+\frac{\pi}{4}$ and $\widehat{N_0M_0M}=\alpha$.\\
Therefore, using Lemma \ref{co}, and the angles on the triangle $M_0N_0M$, we see that:
\begin{align}\label{49.5}
 d((x,t),\Gamma)\le T(x)-t=MM_0=\frac{M_0N_0}{\cos \alpha}=\frac{N_1 P_1}{\cos \alpha}.
\end{align}
Moreover, since the blow-up graph is above the cone $\mathcal{C}_{x,T(x),\delta}$, it follows that
$$d((z_1,w_1),\Gamma)\ge M_1N_1.$$
In particular,
\begin{align}\label{ppf}
 M_1N_1=N_1P_1+P_1M_1=N_1P_1+\cos(\frac{\pi}{4}+\alpha)M_1M_0.
\end{align}
Since $0<\delta<1$, hence $0<\alpha<\frac{\pi}{4}$, it follows that $\cos(\frac{\pi}{4}+\alpha)>0$. Since $M_1M_0=|(z_1,w_1)-(x,t)|$, the result follows from (\ref{49.5}) and (\ref{ppf}).
 \begin{figure}[h!]
\centering
\resizebox{10cm}{!}{\input{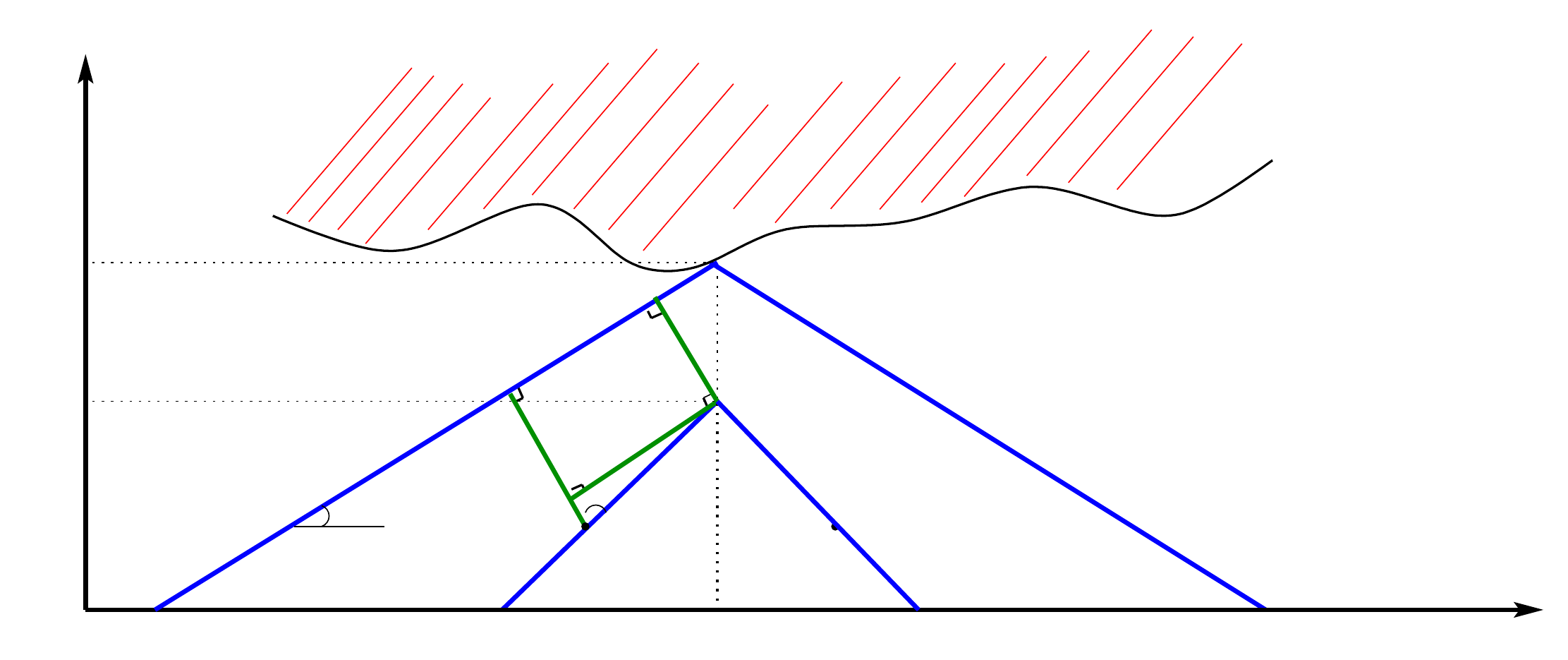_t}}
\caption{Illustration for the proof of (\ref{c})}\label{cone}
\end{figure}

By the same way, we can prove this for the other point $M_2=(z_2,w_2)$, which gives (\ref{c}).\\
\end{proof}
Now, we give the following corollary from the approximation procedure in Lemmas \ref{lab} and \ref{xt}.
\begin{lem}\label{3.2}{\bf (Uniform bounds on variations of $u$ in cones).} For any $R>0$, There exists a constant $C(R)>0$ such that if $(x,t)\in D\cap \overline{D_R}$ then
 $$u(y,s)\ge u(x,t)-C(R),\; \forall (y,s)\in D\cap \overline{K^+_R(x,t)},$$
 $$u(y,s)\le u(x,t)+C(R),\; \forall (y,s)\in \overline{K^-(x,t)}.$$
where the cones $K^\pm$ and $K^\pm_R$ are defined in (\ref{16.55}).
\end{lem}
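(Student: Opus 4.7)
The plan is to deduce Lemma \ref{3.2} by passing to the limit $n\to\infty$ in the analogous bounds for the approximating sequence $u_n$ established in Lemma \ref{lab}, invoking the pointwise convergence furnished by Lemma \ref{xt}. Since both inequalities in Lemma \ref{lab} hold uniformly in $n$, the task reduces to ensuring that each endpoint appearing on the left and right is a point at which $u_n$ converges to the finite value $u$.

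First, I would treat the upper bound on $\overline{K^-(x,t)}$. Given $(x,t)\in D$, i.e.\ $t<T(x)$, the 1-Lipschitz character of $T$ implies that for any $(y,s)\in\overline{K^-(x,t)}$ we have $|y-x|\le t-s$, hence
\[
T(y)\ge T(x)-|y-x|>t-|y-x|\ge s,
\]
so $(y,s)\in D$. By Case 1 of Lemma \ref{xt} applied to both $(x,t)$ and $(y,s)$, we obtain $u_n(x,t)\to u(x,t)$ and $u_n(y,s)\to u(y,s)$. Passing to the limit in the second inequality of Lemma \ref{lab} then yields $u(y,s)\le u(x,t)+C(R)$, where $C(R)$ is the same constant as in Lemma \ref{lab}.

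Second, I would treat the lower bound on $D\cap \overline{K^+_R(x,t)}$. Here the restriction to $D$ in the statement is exactly what is needed: for $(y,s)\in D$ with $s<T(y)$, Case 1 of Lemma \ref{xt} again gives $u_n(y,s)\to u(y,s)$, while $u_n(x,t)\to u(x,t)$ as before. Passing to the limit in the first inequality of Lemma \ref{lab} produces $u(y,s)\ge u(x,t)-C(R)$, as required. (Outside of $D$, i.e.\ when $s\ge T(y)$, Lemma \ref{xt} gives $u_n(y,s)\to+\infty$, so the inequality is trivially preserved but does not contribute to any bound on $u$, which is why the statement restricts to $D$.)

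I do not expect any serious obstacle: the real work has already been performed in Lemma \ref{lab} (Godin-type characteristic monotonicity together with the Cauchy--Schwarz bound on the data) and Lemma \ref{xt} (dichotomy for the limit of $u_n$). The only point requiring a brief geometric verification is the inclusion $\overline{K^-(x,t)}\subset D$ used in the upper bound, which follows from the 1-Lipschitz property of $T$ as above.
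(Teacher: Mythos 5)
Your proof is correct and follows exactly the route the paper intends: the paper states Lemma \ref{3.2} as an immediate corollary of Lemmas \ref{lab} and \ref{xt} without writing out the limit passage, and your argument simply supplies the details, including the (correct) verification via the 1-Lipschitz property of $T$ that $\overline{K^-(x,t)}\subset D$ so that Case 1 of Lemma \ref{xt} applies at every point involved.
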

{\bf Remark}: The constant $C(R)$ depends also on $u_0$ and $u_1$, but we omit this dependence in the sequel.\\
In the following, we give a lower bound on the blow-up rate and we show that $u(x,t)\rightarrow +\infty$ as $t\rightarrow T(x)$.
\begin{prop} {\bf (A general lower bound on the blow-up rate)}\label{ll}  

\item{(i)} If $(u_0,u_1)\in W^{1,\infty}\times L^{\infty}$, then for all $R>0$, there exists $C(R)>0$ such that for all $(x,t)\in D\cap D_R,$ 
$$ d((x,t),\Gamma)e^{u(x,t)} \ge C.$$
In particular, for all $(x,t)\in D\cap \overline{D_R}$, $ u(x,t)\rightarrow +\infty$ as $d((x,t),\Gamma)\rightarrow 0$.

\item{(ii)} If, we only have $(u_0,u_1)\in H^{1}_{loc,u}\times L^{2}_{loc,u}$, then for all $R>0$, there exists $C(R)>0$ such that for all $(x_0,t)\in D\cap D_R,$ 
$$\frac{1}{T(x_0)-t}\int_{|x-x_0|<T(x_0)-t} e^{-u(x,t)}dx \le C(R)\sqrt{d(x_0,t)}.$$
In particular, $e^{-u}$ converges to $0$ in average over slices of the light cone, as $d(x_0,t)\rightarrow 0$.

\end{prop}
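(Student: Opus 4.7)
The natural strategy for both parts is a self-similar rescaling combined with the non blow-up criterion of Corollary~\ref{pro}. Given $(x_0,t_0)\in D\cap D_R$, set $\epsilon:=T(x_0)-t_0>0$ and define
$$\tilde u(y,s):=u(x_0+\epsilon y,\,t_0+\epsilon s)+2\log\epsilon.$$
Equation~(\ref{waveq}) is invariant under this rescaling, so $\tilde u$ is again a solution; its domain terminates at $(y,s)=(0,1)$, which corresponds to $(x_0,T(x_0))\in\Gamma$, hence $\tilde u$ blows up inside $\mathcal C_{0,1,1}$. The contrapositive of Corollary~\ref{pro} then forces $\int_{-1}^{1}[(\partial_s\tilde u)^2+(\partial_y\tilde u)^2+e^{\tilde u}](y,0)\,dy\ge\bar\epsilon_0$, and undoing the rescaling (using $e^{\tilde u}=\epsilon^2 e^u$ and that each derivative picks up a factor $\epsilon$) yields the key slice estimate
\begin{equation}\label{eq:keyslice}
\int_{|x-x_0|<\epsilon}\bigl[(\partial_t u)^2+(\partial_x u)^2+e^{u}\bigr](x,t_0)\,dx\;\ge\;\frac{\bar\epsilon_0}{\epsilon}.
\end{equation}

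For~(i) I would first use $(u_0,u_1)\in W^{1,\infty}\times L^\infty$ to propagate a uniform bound $|\partial_x u|+|\partial_t u|\le C_1(R)$ on $D\cap D_R$. The natural route is through the regularized problem~(\ref{16.5}): differentiating Duhamel's formula for $u_n$, controlling the source $F_n(u_n)=e^{u_n}$ on backward light cones via Lemma~\ref{3.2}, and passing to the limit with Lemma~\ref{xt}. With pointwise gradient bounds the derivative contribution to~(\ref{eq:keyslice}) is only $O(\epsilon)$, giving $\int_{|x-x_0|<\epsilon}e^{u(x,t_0)}\,dx\ge\bar\epsilon_0/(2\epsilon)$ for $\epsilon\le\epsilon_1(R)$. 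Transferring this integral bound to a pointwise one at $(x_0,t_0)$ proceeds by a double use of Lemma~\ref{3.2}: for $\delta\in(0,\epsilon)$, the backward cone based at $(x_0,t_0+\epsilon-\delta)\in D\cap\overline{D_{R+1}}$ contains the slice $\{(x,t_0):|x-x_0|<\epsilon-\delta\}$, giving $e^{u(x,t_0)}\le e^{C(R)}e^{u(x_0,t_0+\epsilon-\delta)}$, and the forward cone from $(x_0,t_0)$ yields $u(x_0,t_0+\epsilon-\delta)\le u(x_0,t_0)+C(R)$; sending $\delta\to 0^+$ gives $\epsilon^2 e^{u(x_0,t_0)}\ge c(R)$, which combined with Lemma~\ref{co} yields~(i). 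The hardest step here is the uniform-in-$n$ propagation of $L^\infty$ gradient bounds, which needs iteration of Lemma~\ref{3.2} along nested backward cones to keep the $e^{u_n}$ source tame.

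For~(ii) the weaker hypothesis precludes pointwise gradient bounds, so the gradient terms in~(\ref{eq:keyslice}) cannot be absorbed. Instead I would combine~(\ref{eq:keyslice}) with the one-dimensional Cauchy--Schwarz oscillation bound $|\tilde u(y,0)-\tilde u(y_\ast,0)|\le|y-y_\ast|^{1/2}\|\partial_y\tilde u(\cdot,0)\|_{L^2}$ and the \emph{a priori} lower bound $u(x,t_0)\ge-C(R)$ on $\overline{D_R}\cap D$ coming from Duhamel's formula (the free wave part of $(u_0,u_1)\in H^1_{loc,u}\times L^2_{loc,u}$ is pointwise bounded on compacta). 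A dichotomy on whether the $L^1$-mass of $e^{\tilde u(\cdot,0)}$ or the $L^2$-size of $\partial_y\tilde u(\cdot,0)$ dominates in~(\ref{eq:keyslice}), together with a careful choice of anchor $y_\ast$ in the first case (so that $\tilde u(y_\ast,0)$ is controllably large), should produce a pointwise upper bound on $e^{-\tilde u(\cdot,0)}$ whose integral, after undoing the rescaling and invoking Lemma~\ref{co}, yields the stated $\sqrt{d((x_0,t_0),\Gamma)}$-decay. The main obstacle here is that naive Cauchy--Schwarz gives $\bigl(\int e^{\tilde u}\bigr)\bigl(\int e^{-\tilde u}\bigr)\ge 4$ in the \emph{wrong} direction, so the $\sqrt\epsilon$ factor must come from genuinely using the interplay among all three terms on the left of~(\ref{eq:keyslice}).
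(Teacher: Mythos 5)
Your overall strategy (rescale, apply the contrapositive of Corollary \ref{pro} to get the slice energy lower bound, then try to isolate the $e^u$ term) is not the paper's route, and it breaks down at two specific points in part (i). First, the uniform pointwise bound $|\partial_x u|+|\partial_t u|\le C_1(R)$ on $D\cap D_R$ is false: the expected behaviour near $\Gamma$ is ODE-like, $u\sim -2\log(T(x)-t)$, so $\partial_t u\sim 2/(T(x)-t)$ is unbounded; more simply, the conclusion of part (i) itself forces $u(x_0,t)\to+\infty$ as $t\to T(x_0)$, which is incompatible with a bounded time derivative. What is true (and is what the paper proves) is the weaker differential inequality $\partial_t u\le C(R)\,e^{u}$, obtained by splitting $u_n=\underline u_n+\tilde u_n$ into free and Duhamel parts and bounding $e^{u_n}$ on the backward cone via Lemma \ref{lab}. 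Second, your ``double use of Lemma \ref{3.2}'' runs the forward-cone inequality backwards: for $(y,s)$ in the forward cone of $(x_0,t_0)$ the lemma gives $u(y,s)\ge u(x_0,t_0)-C(R)$, a lower bound, whereas you need $u(x_0,t_0+\epsilon-\delta)\le u(x_0,t_0)+C(R)$, an upper bound on $u$ at a \emph{later} time, which neither cone inequality provides (and which, iterated, would again contradict blow-up). Note also that the bound $\epsilon^2 e^{u(x_0,t_0)}\ge c(R)$ you aim for is the ODE rate, which the authors explicitly say they cannot reach at general (possibly characteristic) points: item (i) only claims $\epsilon\, e^{u}\ge C$, and the ODE rate is reserved for non-characteristic points in Proposition \ref{p}(ii).

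The paper's actual proof avoids the energy estimate entirely. It integrates $\partial_t\big(e^{-u_n}\big)\ge -C(R)$ in time from $t_1$ to some $t_2>T(x)$ for the globally defined approximations $u_n$ of (\ref{16.5}), and uses Lemma \ref{xt} (namely $u_n(x,t_2)\to+\infty$ above the blow-up curve) to kill the boundary term, giving $e^{-u(x,t_1)}\le C\,(T(x)-t_1)$ directly, and then Lemma \ref{co}. For (ii) the same computation is integrated over the slice $(a_0,b_0)$: Cauchy--Schwarz applied to $\int\partial_t\underline u_n\,e^{-u_n}$ (using only $u_{1,n},\partial_x u_{0,n}\in L^2$ together with the pointwise bound $e^{-u_n}\le e^{C(R)}$) produces the $\sqrt{b_0-a_0}$ factor, and integrating up to $t_0'=2T(x_0)-t_0$, whose slice lies entirely outside $D$, produces the extra power of $T(x_0)-t_0$. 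Your sketch for (ii) stops at an acknowledged obstruction (Cauchy--Schwarz pointing the wrong way) without resolving it, so neither part is established by the proposal as written.
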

{\bf Remark}: Near non characteristic points, we are able to derive the optimal lower bound on the blow-up rate. See item $(ii)$ of Proposition \ref{p}.
\begin{proof}[Proof of Proposition \ref{ll}]\item{(i)} Clearly, the last sentence in item $(i)$ follows from the first, hence, we only prove the first.

 Let $R>0$ and $(x,t)\in D\cap D_R.$ Using the approximation procedure defined in (\ref{16.5}), we write $u_n=\underline u_n+\tilde u_n$ with:
$$\underline u_n(x,t)=\frac{1}{2}\left(u_{0,n}(x-t)+u_{0,n}(x+t)\right)+\frac{1}{2}\int_{x-t}^{x+t}u_{1,n}(\xi)d\xi,$$
$$\tilde u_n(x,t)=\frac{1}{2}\int_0^t\int_{x-t+\tau}^{x+t-\tau}F_n(u_n(z,\tau))dz d\tau$$

(Note that $\underline u_n$ was already defined in (\ref{28.5})).

Since $F_n\ge0$ from (\ref{16.3}), it follows that 
\begin{eqnarray}\label{carreau1}
u_n(x,t)\ge \underline u_n(x,t)\ge -C(R)\mbox{ for all }(x,t)\in D\cap D_R.
\end{eqnarray}
Differentiating $\underline u_n$, we see that
\begin{align}\label{50.5}
  \partial_t \underline u_n(x,t)&=\frac{1}{2}\left( \partial_x u_{0,n}(x+t)-\partial_x u_{0,n}(x-t)\right)+\frac{1}{2}\left(  u_{1,n}(x+t)+ u_{1,n}(x-t)\right)\notag\\
&\le || \partial_x u_{0,n}||_{L^\infty(-R,R)}+|| u_{1,n}||_{L^\infty(-R,R)}\le C(R).
\end{align}
Differentiating $\tilde u_n$, we get
\begin{align*}
 \partial_t \tilde u_n(x,t) &= \frac{1}{2} \int_0^t \left( F_n(u_n(x-t+\tau,\tau)+F_n(u_n(x+t-\tau,\tau))\right) d \tau \\
  &\le \frac{1}{2} \int_0^t \left( e^{u_n(x-t+\tau,\tau)}+e^{u_n(x+t-\tau,\tau)}\right) d \tau 
\end{align*}
since $F_n(u)\le e^{u}$. Since $u_n(x-t+\tau,\tau)\le u_n(x,t)+C(R)$ and $u_n(x+t-\tau,\tau)\le u_n(x,t)+C(R)$ from Lemma \ref{lab}, it follows that

\begin{align}\label{50.6}
\partial_t \tilde u_n(x,t) \le   c t e^{ u_n(x,t)} \le C( R) e^{ u_n(x,t)}.
\end{align}
Therefore, using (\ref{carreau1}) we see that
 \begin{align*}
  \partial_t  u_n(x,t)= \partial_t \underline u_n(x,t)+ \partial_t \tilde u_n(x,t)
  \le C(R)+C(R)e^{u_n(x,t)}\le C(R) e^{u_n(x,t)},
 \end{align*}
hence,
\begin{align}\label{1}
  \partial_t  u_n(x,t) e^{-u_n(x,t)}\le C(R).
 \end{align}
Integrating (\ref{1}) on any interval $[t_1,t_2]$ with $0\le t_1<T(x)<t_2$, we get $e^{-u_n(x,t_1)}-e^{-u_n(x,t_2)}\le C (t_2-t_1)$. Making $n \rightarrow \infty $ and using Lemma $\ref{xt} $ we see that  $ e^{-u(x,t_1)}\le C (t_2-t_1)$.\\
Taking $t_1=t$ and making $t_2\rightarrow T(x)$, we get $ e^{-u(x,t)}\le C (T(x)-t)$. Using Lemma \ref{co} concludes the proof of item $ (i)$ of Proposition \ref{ll}.

\item{(ii)} if $(u_0,u_1)\in H^{1}_{loc,u}\times L^{2}_{loc,u}$, then small modification in the argument of item $(i)$ gives the result. Indeed, if $t_0\in [0,T(x_0))$,
$$a_0=x_0-(T(x_0)-t_0),\; b_0=x_0+(T(x_0)-t_0),$$
and $t\ge 0$, we write from (\ref{carreau1}) and (\ref{50.5})
 \begin{align*}
  \int_{a_0}^{b_0} \partial_t \underline u_n(x,t) e^{-u_n(x,t)}dx\le C(R) \sqrt{b_0-a_0}.
 \end{align*}
Furthermore, from (\ref{50.6}) we write
 \begin{align*}
  \int_{a_0}^{b_0} \partial_t \tilde u_n(x,t) e^{-u_n(x,t)}dx\le C(R) (b_0-a_0).
 \end{align*}
Therefore, it follows that

 \begin{align}\label{50.7}
 -\frac{d}{dt} \int_{a_0}^{b_0} e^{-u_n(x,t)}dx= \int_{a_0}^{b_0} \partial_t u_n(x,t) e^{-u_n(x,t)}dx\le C(R) \sqrt{b_0-a_0}.
 \end{align}
Integrating (\ref{50.7}) on interval $(t_0,t_0')$, where
 \begin{align}\label{51.5}
  t_0'=2T(x_0)-t_0,
 \end{align}
we get
 \begin{align}\label{50.8}
  \int_{a_0}^{b_0} e^{-u_n(x,t_0)}dx-\int_{a_0}^{b_0}  e^{-u_n(x,t_0')}dx\le C(R)\sqrt{b_0-a_0}(t_0'-t_0)=2\sqrt{2}C(R)(T(x_0)-t_0)^\frac{3}{2}.
 \end{align}

Since $x\mapsto T(x)$ is 1-Lipschitz and $T(x_0)$ is the middle of $[t_0,t_0']$, we clearly see that the segment $[a_0,b_0]\times \{t_0'\}$ lays outside the domain of definition of $u(x,t)$, using Lemma \ref{xt}, we see that
$$\int_{a_0}^{b_0}  e^{-u_n(x,t_0')}dx\rightarrow 0 \mbox{ as }n\rightarrow +\infty,$$
on the one hand (we use Lebesgue Lemma together with the bound (\ref{carreau1})).

On the other hand, similarly, we see that

$$\int_{a_0}^{b_0}  e^{-u_n(x,t_0)}dx\rightarrow \int_{a_0}^{b_0}  e^{-u(x,t_0)}dx \mbox{ as }n\rightarrow +\infty.$$

Thus, the conclusion follows from (\ref{50.8}), together with Lemma \ref{co}, this concludes the proof of Proposition \ref{ll}.

\end{proof}
\subsection{ The blow-up rate}
This subsection is devoted to bound the solution $u$. We have obtained the following result.
\begin{prop} \label{p} For any $R>0$, there exists $C(R)>0$, such that:
  \item{(i)} {\bf (Upper bound on $u$)} For all $(x,t)\in D\cap D_R$ we have
 \begin{eqnarray*}
 e^ud((x,t),\Gamma)^2\le C.
 \end{eqnarray*}
 \item{(ii)} {\bf (Lower bound on $u$)} If in addition $(u_0,u_1)\in W^{1,\infty}\times L^{\infty}$ and $x$ is a non-characteristic point, then for all $(x,t)\in D\cap \overline{ D_R}$
 \begin{eqnarray*}
 e^ud((x,t),\Gamma)^2\ge\frac{1}{C} .
 \end{eqnarray*}

\end{prop}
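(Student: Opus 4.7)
The plan is to adapt Godin's approach \cite{MR1854257}, working with the smooth approximations $u_n$ of (\ref{16.5}) and passing to the limit via Lemma~\ref{xt}; all constants $C(R)$ below are uniform in $n$.

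For part (i), I would establish the estimate $e^{u_n(x,t)}\,d((x,t),\Gamma)^2 \le C(R)$ uniformly in $n$. This is essentially Godin's upper bound, whose proof, based on characteristic integration of the Duhamel formula against a suitable ODE blow-up profile, carries over verbatim to the $C^2$ approximations $u_n$. Passing to the limit $n\to\infty$ via Lemma~\ref{xt} and converting $T(x)-t$ into $d((x,t),\Gamma)$ via Lemma~\ref{co} concludes.

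Part (ii) is the technical heart. The plan is to establish the ODE-type pointwise bound
\begin{equation*}
 \partial_t u_n(x,t) \le C(R)\, e^{u_n(x,t)/2} \qquad\text{for } (x,t) \in D\cap D_R,\ x\in\mathcal{R},
\end{equation*}
and then integrate it. Writing $u_n = \underline u_n + \tilde u_n$ from the Duhamel decomposition, the hypothesis $(u_0,u_1)\in W^{1,\infty}\times L^{\infty}$ gives $|\partial_t \underline u_n|\le C(R)$, and
\begin{equation*}
 \partial_t \tilde u_n(x,t) \le \tfrac12\int_0^t \bigl(e^{u_n(x+\sigma,\,t-\sigma)} + e^{u_n(x-\sigma,\,t-\sigma)}\bigr)\,d\sigma.
\end{equation*}
I would combine two complementary bounds on the integrand $e^{u_n(z,\tau)}$ along the backward light cone from $(x,t)$: (a)~Lemma~\ref{lab} gives $e^{u_n(z,\tau)} \le C(R)\,e^{u_n(x,t)}$; (b)~the upper bound (i) together with Lemma~\ref{fo} --- which is where the non-characteristic assumption enters crucially --- gives $e^{u_n(z,\tau)} \le C(R)/(d((x,t),\Gamma) + \sqrt{2}\,\sigma)^2 \le C(R)/\sigma^2$. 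Splitting the $\sigma$-integral at $\sigma^\ast \asymp e^{-u_n(x,t)/2}$, using (a) on $[0,\sigma^\ast]$ and (b) on $[\sigma^\ast, t]$, produces the stated ODE-type bound.

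Once this is established, I would rewrite it as $\tfrac{d}{dt}\bigl(-2 e^{-u_n(x,t)/2}\bigr) \le C(R)$ (using also the uniform lower bound $u_n \ge -C(R)$ coming from $F_n \ge 0$ and the boundedness of $\underline u_n$), integrate from $t$ to some $t' < T(x)$, pass $n\to \infty$ via Lemma~\ref{xt}, and finally send $t'\to T(x)^-$, using Proposition~\ref{ll}(i) to ensure $u(x,t')\to +\infty$. This gives $e^{u(x,t)}(T(x)-t)^2 \ge 4/C(R)^2$, which Lemma~\ref{co} upgrades to the claimed bound $e^{u(x,t)}\,d((x,t),\Gamma)^2 \ge 1/C'(R)$. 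The decisive step, and main obstacle, is the splitting argument itself: the improvement from the weaker $\partial_t u_n \le C\,e^{u_n}$ (which yields only Proposition~\ref{ll}(i)) to the true ODE rate $\partial_t u_n \le C\,e^{u_n/2}$ depends on \emph{both} the upper bound (i) and Lemma~\ref{fo}, and hence on the non-characteristic hypothesis $x\in\mathcal{R}$; this is precisely why this refined lower bound is not available at characteristic points.
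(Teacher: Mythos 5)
Your part (ii) is correct and is essentially the paper's argument with one step reorganized. The paper uses the same decomposition $u=\bar u+\tilde u$, bounds $\partial_t\bar u$ by $C(R)$ using $(u_0,u_1)\in W^{1,\infty}\times L^\infty$, and controls $\partial_t\tilde u$ by combining exactly your two ingredients: the cone-variation bound (Lemma \ref{3.2}) and the upper bound of part (i) transported along the light cone via Lemma \ref{fo}. The difference is how the two bounds are merged: the paper interpolates pointwise, writing $u(y,s)=(1-a)u(y,s)+au(y,s)$ for a fixed $a\in(\frac12,1)$ to get $e^{u(y,s)}\le Ce^{(1-a)u(x,t)}d((y,s),\Gamma)^{-2a}$, integrates the resulting power $(d+\sqrt2(t-\tau))^{-2a}$ (integrable since $2a>1$), and ends with $e^{(a-1)u}\partial_t u\le Cd^{1-2a}$, which it integrates in time using $2-2a>0$. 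Your splitting of the $\sigma$-integral at $\sigma^*\asymp e^{-u/2}$ is the endpoint version of this interpolation and yields the cleaner estimate $\partial_t u\le Ce^{u/2}$ directly; both routes are valid, and you correctly identify that the naive bound $\int_0^t(d+\sqrt2\sigma)^{-2}d\sigma\le C/d$ would be circular, which is precisely why the splitting (or the paper's choice $a<1$) is needed. Your limiting procedure ($n\to\infty$ via Lemma \ref{xt}, then $t'\to T(x)$ via Proposition \ref{ll}(i), then Lemma \ref{co}) matches the paper's in substance.

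The one genuine gap is part (i): you do not prove it, you cite it. Since (i) is an essential input to your bound (b) in part (ii), the argument is incomplete as written. The paper does supply a proof, and it is not a ``verbatim'' transport of Godin's regular-data argument so much as a self-contained iteration that isolates exactly where regularity is (not) needed: applying Duhamel's formula on a square with vertices $(x,t_1)$, $(x\pm\frac{t_2-t_1}{2},\frac{t_1+t_2}{2})$, $(x,t_2)$ and adding the two resulting identities kills the $\partial_t u$ trace term and gives
\begin{equation*}
u(x,t_2)\ge u(x,t_1)-2C(R)+\tfrac{(t_2-t_1)^2}{4}e^{u(x,t_1)-C(R)},
\end{equation*}
where the only input is Lemma \ref{3.2} (valid for $H^1_{loc,u}\times L^2_{loc,u}$ data via the approximation scheme). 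Choosing $t_2-t_1=\sigma e^{-u(x,t_1)/2}$ forces $u(x,t_2)\ge u(x,t_1)+1$, and iterating produces times $t_n$ with $t_{n+1}-t_n\le\sigma e^{-(u(x,t_1)+n-1)/2}$; summing the geometric series bounds $T(x)-t_1$ by $Ce^{-u(x,t_1)/2}$, which is the claim after Lemma \ref{co}. If you intend to run Godin's proof on the approximations $u_n$ instead, you must still exhibit the uniform-in-$n$ constants, which again comes down to Lemma \ref{lab}; either way, this step needs to be written out rather than asserted.
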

{\bf Remark}: In \cite{MR1854257}, Godin didn't use the notion of characteristic point, but the regularity of initial data was fundamental to have the result. In this work, our initial data are less regular, so we focused on the case of non-characteristic point in order to get his result.

\begin{proof}[Proof of Proposition \ref{p}]$ $\\
\noindent $(i)$ Consider $R>0$. We will show the existence of some $C(R)>0$ such that for any $(x,t_1)\in D\cap D_R,$ we have
$$e^u d((x,t),\Gamma)^2\le C(R).$$
Consider then $(x,t_1)\in D\cap D_R.$ Since $x\mapsto T(x)$ is 1-Lipschitz, we clearly see that
\begin{eqnarray}\label{*}
 (x,T(x))\in D_{\bar R}\mbox{ with }\bar R=2R+T(a)+1.
\end{eqnarray}
Consider now $t_2\in (t_1,T(x))$ to be fixed later. We introduce the square domain with vertices $(x,t_1),(x+\frac{t_2-t_1}{2},\frac{t_1+t_2}{2}),(x,t_2),(x-\frac{t_2-t_1}{2},\frac{t_1+t_2}{2})$. Let

$$T_{sup}=\{(\xi,t)\,|\,\frac{t_1-t_2}{2} <t<t_2,\; |x-\xi|<t_2-t\},$$
$$T_{inf}=\{(\xi,t)\,|\, t_1<t<\frac{t_1-t_2}{2},\; |x-\xi|<t-t_1\},$$
respectively the upper and lower half of the considered square. From Duhamel's formula, we write:
\begin{eqnarray*}
u(x, t_2)&=&\frac{1}{2}  u(x+\frac{t_2-t_1}{2},\frac{t_2+t_1}{2} )+\frac{1}{2}  u(x-\frac{t_2-t_1}{2},\frac{t_2+t_1}{2} )\notag\\&+& \frac{1}{2}\int_{x-\frac{t_2-t_1}{2}}^{x+\frac{t_2-t_1}{2}}\partial_t u (\xi,\frac{t_2-t_1}{2})\,d\xi+\frac{1}{2}\int_{T_{sup}} e^{u(\xi,t)}\;d\xi dt,
\end{eqnarray*}
and,
\begin{eqnarray*}
u(x, t_1)&=&\frac{1}{2}  u(x+\frac{t_2-t_1}{2},\frac{t_2+t_1}{2} )+\frac{1}{2}  u(x-\frac{t_2-t_1}{2},\frac{t_2+t_1}{2} )\notag\\&-& \frac{1}{2}\int_{x-\frac{t_2-t_1}{2}}^{x+\frac{t_2-t_1}{2}}\partial_t u (\xi,\frac{t_2-t_1}{2})\,d\xi+\frac{1}{2}\int_{ T_{inf}}e^{u(\xi,t)}\;d\xi dt.
\end{eqnarray*}
So,
\begin{eqnarray}\label{t}
&u(x, t_1)+ u(x, t_2)\\
&=u(x+\frac{t_2-t_1}{2},\frac{t_2+t_1}{2} )+ u(x-\frac{t_2-t_1}{2},\frac{t_2+t_1}{2} )+\frac{1}{2}\int_{ T_{inf}\cup T_{sup}}e^{u(\xi,t)}\;d\xi dt.\notag
\end{eqnarray}
Since the square $ T_{inf}\cup T_{sup}\subset D_{\bar R}$ from (\ref{*}), applying Lemma \ref{3.2}, we have for all $(\tilde x, \tilde t)\in T_{inf}\cup T_{sup}$ and for some $C(R)>0$: 
$$u(\tilde x, \tilde t)\ge u(x,t_1)-C.$$

Applying this to (\ref{t}), we get
$$u(x,t_2)\ge u(x,t_1)-2C+\frac{(t_2-t_1)^2}{4}e^{(u(x,t_1)-C)}.$$
Now, choosing $t_2=t_1+\sigma e^{-u(x,t_1)/2}$ where $\sigma= 2e^\frac{c}{2}\sqrt{\eta+2C}$, we see that\\
-either $(x,t)\notin D$\\
-or $(x,t_2)\in D$ and $u(x,t_2)\ge u(x,t_1)+1$\\
by the above-given analysis.
 
In the second case, we may proceed similarly and define for $n\ge3$ a sequence
\begin{align}\label{triangle}
t_n=t_{n-1}+\sigma e^{-u(x,t_{n-1})/2},
\end{align}
as long as $(x,t_{n-1})\in D$. Clearly, the sequence $(t_n)$ is increasing whenever it exists. Repeating between $t_n$ and $t_{n-1}$, for $n\ge3$, the argument we first wrote for $t_1$ and $t_2$, we see that
\begin{align}\label{carreau}
  u(x,t_n)\ge u(x,t_{n-1})+1,
\end{align}
as long as $(x,t_n)\in D$. Two cases arise then:\\
\medskip

-{\bf Case 1}: The sequence $(t_n)$ can be defined for all $n\ge1$, which means that
\begin{align}\label{trian}
 (x,t_n)\in D,\; \forall n\in \mathbb{N}^*.
\end{align}
In particular, (\ref{triangle}) and (\ref{carreau}) hold for all $n\ge2$.

If $t_\infty=\lim_{n\rightarrow \infty}t_n$, then, from (\ref{trian}), we see that $t_\infty\le T(x)$.

Since $u(x,t_n)\rightarrow +\infty$ as $n\rightarrow \infty$ from (\ref{carreau}), we need to have
$$t_\infty=T(x),$$
from the Cauchy theory. Therefore, using Lemma \ref{co}, (\ref{triangle}) and (\ref{carreau}), we see that
\begin{align*}
 d((x,t_1),\Gamma)\le T(x)-t_1=\sum\limits_{n=1}^\infty (t_{n+1}-t_n)\le \sigma \sum\limits_{n=1}^\infty  e^{-(u(x,t_1)+(n-1) )/2}\equiv C(R) e^{-u(x,t_1)/2},
\end{align*}
which is the desired estimate.\\
\medskip

-{\bf Case 2}: The sequence $(t_n)$ exists only for all $n\in[1,k]$ for some $k\ge2$. This means that $(x,t_k)\notin D$, that is $t_k\ge T(x)$.\\
Moreover, (\ref{triangle}) holds for all $n\in[2,k]$, and (\ref{carreau}) holds for all $n\in[2,k-1]$ (in particular, it is never true if $k=2$). As for case 1, we use Lemma \ref{co}, (\ref{triangle}) and (\ref{carreau}) to write
\begin{align*}
 d((x,t_1),\Gamma)&\le T(x)-t_1\le t_k-t_1=\sum\limits_{n=1}^{k-1}(t_{n+1}-t_n)\le \sigma \sum\limits_{n=1}^{k-1} e^{-(u(x,t_1)+(n-1) )/2}\\
&\le \sigma e^{-(u(x,t_1))/2}\sum\limits_{n=1}^{k-1} e^{(n-1) )/2}\equiv C(R) e^{-u(x,t_1)/2},
\end{align*}
which is the desired estimate. This concludes the proof of item $i)$ of Proposition \ref{p}.

\medskip

\noindent $(ii)$ Consider $R>0$ and $x$ a non-characteristic point such that $(x,t) \in D_R\cap D$. We dissociate $u$ into two parts $u=\bar u+\tilde u$ with:
$$\bar u(x,t)=\frac{1}{2}\left(u_0(x-t)+u_0(x+t)\right)+\frac{1}{2}\int_{x-t}^{x+t}u_1(\xi)d\xi,$$
$$\tilde u(x,t)=\frac{1}{2}\int_0^t\int_{x-t+\tau}^{x+t-\tau}e^{u(z,\tau)}dz d\tau.$$
Differentiating $\bar u$, we see that
\begin{align*}
  \partial_t \bar u(x,t)&=\frac{1}{2}\left( \partial_x u_0(x+t)-\partial_x u_0(x-t)\right)+\frac{1}{2}\left(  u_1(x+t)+ u_1(x-t)\right)\\
&\le || \partial_x u_0||_{L^\infty(-R,R)}+|| u_1||_{L^\infty(-R,R)}\le C(R),
\end{align*}
since $(x,t)\in D_R$. Consider now an arbitrary $a\in(\frac{1}{2},1)$. Since $u(x,t)\rightarrow +\infty$ as $d((x,t),\Gamma)\rightarrow 0$ (see Proposition \ref{ll} above), it follows that

   \begin{eqnarray}\label{cp} 
 e^{(a-1)  u(x,t)} \partial_t  \bar u(x,t)\le C(R) d((x,t),\Gamma)^{-2a+1}.
   \end{eqnarray}
Now, we will prove a similar inequality for $\tilde u$.
 Differentiating $\tilde u$, we see that
 \begin{eqnarray}\label{a}
  \partial_t \tilde u=\frac{1}{2}\int_0^t (e^{u(x-t+\tau,\tau)}+e^{u(x+t-\tau,\tau)}) d\tau.
 \end{eqnarray}
Using the upper bound in Proposition \ref{p}, part $(i)$, which is already proved and Lemma \ref{3.2}, we see that for all $(y,s)\in\overline{K^-(x,t)}, $
 \begin{eqnarray*}
  u(y,s)=(1-a)u(y,s)+a u(y,s)\le (1-a) (u(x,t)+C)+a (\log C-2 \log d((y,s),\Gamma)).
 \end{eqnarray*}
 So,
\begin{eqnarray}\label{b}
  e^{u(y,s)}\le C e^{(1-a)u(x,t)}(d((y,s),\Gamma))^{-2a}.
 \end{eqnarray}
Since $ x$ is a non-characteristic point, there exists $\delta_0\in (0,1)$ such that the cone $\mathcal{C}_{\bar x,T(\bar x),\delta_0}$ is below the blow-up graph $\Gamma$.

\noindent Applying $(\ref{b})$ and Lemma \ref{fo} to $(\ref{a})$, and using the fact that $|(x,t)-(z_1,w_1)|^2=2(\tau-t)^2$, we write (recall that $\frac{1}{2}<a<1$):
\begin{align*}
 \partial_t \tilde u &= \frac{1}{2} \int_0^t e^{u(z_1, w_1)}+e^{u(z_2, w_2)} d \tau \\
&\le  \frac{1}{2}\int_0^t Ce^{(1-a) u(x,t)}(d((z_1,w_1),\Gamma)^{-2a}+d((z_2,w_2),\Gamma)^{-2a})d \tau \\
&\le  C  e^{(1-a) u(x,t)}\int_0^t \left(  d((x,t),\Gamma)+|(x,t)-(z_1,w_1)|\right)^{-2a} +\left(  d((x,t),\Gamma)+|(x,t)-(z_1,w_1)| \right)^{-2a} d \tau \\
&\le  C  e^{(1-a) u(x,t)}\int_0^t \left( d((x,t),\Gamma)+\sqrt{2}(t-\tau)\right)^{-2a} d \tau \le C e^{(1-a) u(x,t)}\frac{1}{\sqrt 2 (2a-1)} d((x,t),\Gamma)^{-2a+1},
\end{align*}
which yields 
\begin{eqnarray}\label{ccp}
  e^{(a-1) u(x,t)} \partial_t \tilde u(x,t)\le C d((x,t),\Gamma)^{-2a+1}.
\end{eqnarray}
In conclusion, we have from (\ref{cp}), (\ref{ccp}) and Lemma \ref{co}
\begin{eqnarray}\label{A9}
  e^{(a-1) u(x,t)} \partial_t u(x,t)\le C d((x,t),\Gamma)^{-2a+1}\le C (T(x)-t)^{-2a+1}.
\end{eqnarray}
Since $u(x,t)\rightarrow+\infty$ as $d((x,t),\Gamma)\rightarrow 0$ from Proposition \ref{ll}, integrating (\ref{A9}) between $t$ and $T(x)$, we see that
$$e^{(a-1) u(x,t)}  \le C (T(x)-t)^{2-2a}.$$ Using again Lemma \ref{co}, we complete the proof of part $(ii)$ of Proposition \ref{p}.
\end{proof}
\section{Blow-up estimates for equation (\ref{waveq})}\label{sec4}
In this section, we prove the three results of our paper: Theorem \ref{th}, Theorem \ref{NEW} and Proposition \ref{2.1}. Each proof is given in a separate subsection.
\subsection{Blow-up estimates in the general case}
In this subsection, we use energy and ODE type estimates from previous sections and give the proof of Theorem \ref{th}.
\begin{proof}[Proof of Theorem \ref{th}]
  $(i)$ Let $R>0$, $a\in\mathbb{R}$ such that $(a,T(a))\in D_R$ and $(x,t)\in\mathcal{C}_{a,T(a),1}$. Consider $(\xi,\tau)$ the closest point of $\mathcal{C}_{a,T(a),1}$ to $(x,t)$. This means that 
$$ ||(x,t)-(\xi,\tau)||=\inf_{x'\in \mathbb{R}}\{||(x,t)-(x',|x'|=1-t)||\}=d((x,t),\mathcal{C}_{a,T(a),1}).$$
 By a simple geometrical construction, we see that it satisfies the following:
\begin{equation} \left\{
\begin{array}{l}
\tau=T(a)-(\xi-a) \\
\tau=t+(\xi-x),
\end{array}
\right . \label{ss}
\end{equation}
 hence, $T(a)-t-2 \xi+a+x=0$, so
\begin{align}\label{sss}
 \xi-x=\frac{1}{2} \left( (T(a)-t)+(a-x)\right).
\end{align}
Using the second equation of (\ref{ss}) and (\ref{sss}) we see that:
\begin{align*}
||(x,t)-(\xi,\tau)||=\sqrt{(\xi-x)^2+(\tau-t)^2}=\sqrt{2}|\xi-x|=\frac{\sqrt{2}}{2}| (T(a)-t)+(a-x)|.
\end{align*}
Thus,
\begin{align}\label{ssss} d((x,t),\Gamma)\ge d((x,t),\mathcal C_{a,T(a),1})=\frac{\sqrt{2}}{2}| (T(a)-t)+(a-x)|.
\end{align}
From Proposition \ref{p}, (\ref{ssss}) and the similarity transformation (\ref{trans_auto}) we have:
\begin{align*}
 e^{w_a(y,s)}&\le (T(a)-t)^2 e^{u(x,t)}\le C \frac{(T(a)-t)^2}{d((x,t),\Gamma)^2}\le C \left(\frac{T(a)-t}{T(a)-t-|x-a|}\right)^2 \\ &\le  \frac{C}{(1-|y|)^2}.
\end{align*}
Which gives the first inequality of $(i)$. The second one is given by Proposition \ref{p} and Lemma \ref{co}.\\
\medskip

$(ii)$ This is a direct consequence of Proposition \ref{ll} and Lemma \ref{co}.
\medskip

$(iii)$ Now, we will use section \ref{sec2} to prove this. Arguing by contradiction, we assume that $u$ is not global and that\\
$\forall  \epsilon_0>0,$ $\exists x_0 \in \mathbb{R}, \exists t_0 \in [0, T(x_0)),$ such that 

\begin{eqnarray*}
\frac{1}{T(x_0)-t_0} \displaystyle \int_{I} ((u_t(x,t_0))^2+(u_x(x,t_0))^2+e^{u(x,t_0)})dx < \frac{\epsilon_0}{(T(x_0)-t_0)^2},\\
\mbox{where }I=(x_0-(T(x_0)-t_0),x_0+(T(x_0)-t_0))
\end{eqnarray*}
We introduce the following change of variables:
\begin{equation*}
v(\xi, \tau)=u(x,t)+\log(T(x_0)-t_0),\mbox{ with }x=x_0+\xi(T(x_0)-t_0),\, t=t_0+\tau(T(x_0)-t_0).
\end{equation*}
Note that $v$ satisfies equation (\ref{waveq}). For $\epsilon_0=\bar \epsilon_0$, $v$ satisfies ($\ref{soleil}$), so, by Corollary \ref{pro}, $ v$ doesn't blow-up in $\{(\xi,\tau) |\, |\xi|< 1-\tau, \tau\in [0,1) \},$ thus, 
 $u$ doesn't blow-up in $\{(x,t) |\, |x-x_0|< T(x_0)-t, t\in [t_0,T(x_0)) \}$, which is a contradiction. This concludes the proof of Theorem \ref{th}.\\
\end{proof}
\subsection{Blow-up estimates in the non-characteristic case}
In this subsection, we prove Theorem \ref{NEW}. We give first the following corollary of Proposition \ref{p}.
\begin{cor} \label{cou}
Assume that $(u_0,u_1)\in W^{1,\infty}\times L^{\infty}$. Then, for all $R>0$, $a\in \mathcal{R}$ such that $(a,T(a))\in D_R$, we have for all $s\ge -\log T(a)$ and $|y|<1$
 \begin{eqnarray*}
| w_a(y,s)|\le C(R).
 \end{eqnarray*}
\end{cor}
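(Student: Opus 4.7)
The plan is to express $w_a(y,s)$ in terms of $u(x,t)$ at the point $(x,t) = (a + y(T(a)-t), t)$ via the similarity transformation, then apply the two-sided bound on $e^{u(x,t)} d((x,t),\Gamma)^2$ from Proposition \ref{p}, and finally use the equivalence $d((x,t),\Gamma) \sim T(a)-t$ (valid in the light cone of the non-characteristic point $a$) to convert the distance to $\Gamma$ into the factor $(T(a)-t)^2$.

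First, for $|y|<1$ and $s \ge -\log T(a)$, set $t = T(a) - e^{-s}$ and $x = a + y(T(a)-t)$, so that $(x,t) \in \mathcal{C}_{a,T(a),1}$ and \eqref{trans_auto} gives
$$e^{w_a(y,s)} = e^{u(x,t)}\,(T(a)-t)^2.$$
Since $(a,T(a)) \in D_R$, the point $(x,t)$ lies in $D_{R'}$ for some $R' = R'(R,T(a))$. Moreover, combining Lemma \ref{co} with Lemma \ref{so}, which applies because $a \in \mathcal{R}$ with parameter $\delta_0 = \delta_0(a) \in (0,1)$, yields a two-sided comparison
$$\frac{1}{\sqrt{2}\,c}\,(T(a)-t) \;\le\; d((x,t),\Gamma) \;\le\; c\,(T(a)-t),$$
for some constant $c = c(\delta_0)$.

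The upper bound on $w_a$ then follows by combining Proposition \ref{p}(i) with the left-hand estimate on $d((x,t),\Gamma)$:
$$e^{u(x,t)}(T(a)-t)^2 \;\le\; 2c^2\, e^{u(x,t)} d((x,t),\Gamma)^2 \;\le\; 2c^2\, C(R').$$
For the lower bound, I intend to combine Proposition \ref{p}(ii) with the right-hand estimate on $d((x,t),\Gamma)$, obtaining
$$e^{u(x,t)}(T(a)-t)^2 \;\ge\; \frac{1}{c^2}\,e^{u(x,t)} d((x,t),\Gamma)^2 \;\ge\; \frac{1}{c^2\,C(R')}.$$
Taking logarithms of the two resulting inequalities gives $|w_a(y,s)|\le C(R)$.

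The main obstacle is justifying the use of Proposition \ref{p}(ii), whose statement requires the spatial coordinate $x$ itself to be non-characteristic, whereas we have only assumed $a \in \mathcal{R}$. To resolve this, I would note that $|x-a| \le T(a) - t = e^{-s} \to 0$ as $s \to \infty$, so that $x$ is close to $a$ for $s$ large and inherits the non-characteristic property, with a slightly degraded opening parameter, from the $\delta_0$-cone at $(a,T(a))$ propagating to nearby spatial points; the complementary regime where $s$ belongs to a bounded interval is handled by a straightforward compactness argument, since $u$ is continuous on the compact subset of $D$ traced out by $(x,t)$ there, and $T(a)-t$ is bounded below there.
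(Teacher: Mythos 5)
Your route is the same as the paper's: apply the two--sided bound $\frac1C\le e^{u}d((x,t),\Gamma)^2\le C$ of Proposition \ref{p} at the point $(x,t)\in\mathcal{C}_{a,T(a),1}$ corresponding to $(y,s)$, and convert $d((x,t),\Gamma)$ into $T(a)-t$ via Lemmas \ref{co} and \ref{so}. That skeleton is exactly the paper's proof, and you are right to flag the one delicate point, namely that Proposition \ref{p}(ii) is stated for $x$ itself non-characteristic whereas the corollary only assumes $a\in\mathcal{R}$ (the paper applies the lower bound at every $(x,t)$ in the cone without comment).

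However, the patch you propose for that point does not work. Proximity to a non-characteristic point does not confer the non-characteristic property: since $T$ is merely $1$-Lipschitz, one can have $T(x)=T(a)+|x-a|$ for $x$ arbitrarily close to $a$, and then for every $\delta\in(0,1)$ the cone $\mathcal{C}_{x,T(x),\delta}$ contains the point $\bigl(a,\,T(a)+(1-\delta)|x-a|\bigr)$, which lies strictly above $\Gamma$ over $a$; such an $x$ is characteristic no matter how small $|x-a|$ is. (Openness of $\mathcal{R}$ is a genuinely hard theorem even for power nonlinearities and is nowhere established in this paper, so it cannot be invoked.) The compactness argument for bounded $s$ does not help either, since the problem is not the size of $T(a)-t$ but the status of the spatial point $x$. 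The correct repair is not to make $x$ non-characteristic but to observe that non-characteristicness enters the proof of Proposition \ref{p}(ii) only through Lemma \ref{fo}, i.e.\ through the existence of a $\delta$-cone below $\Gamma$ controlling the distance from the boundary points $(z_j,w_j)$ of $\mathcal{C}_{x,t,1}$ to $\Gamma$. For $(x,t)\in\mathcal{C}_{a,T(a),1}$ one has $\mathcal{C}_{x,t,1}\subset\mathcal{C}_{a,T(a),1}$, and the single cone $\mathcal{C}_{a,T(a),\delta_0(a)}$ serves as the barrier for all such $(x,t)$ simultaneously (using $|x-a|\le T(a)-t$ to compare $d((x,t),\Gamma)$ with the distance from $(x,t)$ to that cone), yielding Lemma \ref{fo} and hence the lower bound of Proposition \ref{p}(ii) at every point of $\mathcal{C}_{a,T(a),1}$ with a constant depending only on $R$ and $\delta_0(a)$. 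With that substitution your argument goes through.
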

\begin{proof}[Proof of Corollary \ref{cou}] Assume that $(u_0,u_1)\in W^{1,\infty}\times L^{\infty}$ and consider $R>0$ and $a\in \mathcal{R}$ such that $(a,T(a))\in D_R$.
 On the one hand, we recall from Proposition \ref{p} that
$$\forall (x,t)\in \mathcal {C}_{a,T(a),1},\;\frac{1}{ C}\le e^ud((x,t),\Gamma)^2\le C.$$
Using (\ref{trans_auto}), we see that
 \begin{eqnarray*}
\frac{1}{C}\le(\frac{d((x,t),\Gamma)}{T(a)-t})^2 e^{w_a(y,s)}\le C, \mbox{ with }y=\frac{x-a}{T(a)-t}\mbox{ and }s=-\log(T(a)-t).
 \end{eqnarray*}
Since
 \begin{eqnarray*}
\frac{1}{C}\le\frac{d((x,t),\Gamma)}{T(x)-t}\le C,
 \end{eqnarray*}
From Lemmas \ref{co} and \ref{so}, this yields to the conclusion of Corollary \ref{cou}.
\end{proof}
Now, we give the proof of Theorem \ref{NEW}.
\begin{proof}[Proof of Theorem \ref{NEW}] Consider $R>0$ and $a\in \mathcal{R}$ such that $(a,T(a))\in D_R$. We note first that the  fact that $|w_a(y,s)|\le C$ for all $|y|<1$ and $s\ge-\log T(a)$ follows from the Corollary \ref{cou}. It remains only to show that $\int_{-1}^{1}
\left( ({\partial_s w_a}(y,s))^2 + ({\partial_y w_a}(y,s))^2\right)\, dy\le C$. From Proposition \ref{p}, Lemmas \ref{co} and \ref{so}, we have:
\begin{equation}\label{l}
 \forall (x,t)\in \mathcal {C}_{a,T(a),1},\;  e^{u(x,t)}\le \frac{C(R)}{(T(a)-t)^2}.
\end{equation}
We define $ \mathcal{E}_a$ the energy of equation (\ref{waveq}) by

\begin{equation}\label{f}
 \mathcal{E}_a(t)=\frac{1}{2}\int_{|x-a|<T(a)-t}(\partial_t u(x,t))^2+(\partial_x u(x,t))^2dx-\int_{|x-a|<T(a)-t} e^{u(x,t)} dx.
\end{equation}
From Shatah-Struwe \cite{MR1674843}, we have
\begin{equation*}
\frac{d}{dt} \mathcal{E}_a(t)\le C e^{u(a-(T(a)-t),t)}+ C e^{u(a+(T(a)-t),t)}.
\end{equation*}
Integrating it over $[0,t)$ and using (\ref{l}), we see that
\begin{align*}
 \mathcal{E}_a(t)&\le \mathcal{E}_a(0)+C\int_0^t e^{u(a-(T(a)-s),s)}ds+C\int_0^t e^{u(a+(T(a)-s),s)}ds\\
&\le C(a,||(u_0,u_1)||_{H^{1}\times L^2 (-R,R)})+C\int_0^t \frac{ds}{(T(a)-s)^2}\le C+\frac{C}{(T(a)-t)}.
\end{align*}
Thus,
\begin{equation}\label{k}
 \mathcal{E}_a(t) \le\frac{C}{(T(a)-t)}.
\end{equation}
Now, using (\ref{l}) and (\ref{k}) to bound the two first terms in the definition of $\mathcal{E}_a(t)$ (\ref{f}), we get
\begin{align}\label{lo}
\notag \frac{1}{2}\int_{|x-a|<T(a)-t}&(\partial_t u(x,t))^2+(\partial_x u(x,t))^2dx \le \int_{|x-a|<T(a)-t} e^u dx+\frac{C}{(T(a)-t)}\\ &\le \int_{|x-a|<T(a)-t} \frac{C}{(T(a)-t)^2}dx+ \frac{C}{(T(a)-t)}\le \frac{C}{(T(a)-t)}.
\end{align}

Writing inequality (\ref{lo}) in similarity variables, we get for all $s\ge -\log T(a)$,
$$\int_{-1}^1(\partial_s w_a(y,s))^2+\int_{-1}^1(\partial_y w_a(y,s))^2\le C(R).$$
This yields to the conclusion of Theorem \ref{NEW}.
\end{proof}
Now, we give the proof of Proposition \ref{2.1}.
\begin{proof}[Proof of Proposition \ref{2.1}]
 
 Multiplying (\ref{equaw}) by $\partial_s w$, and integrating
over (-1,1), we see that
\begin{eqnarray*}
 \int_{-1}^{1} {\partial_s w}{\partial^2_{s} w} \, dy-\int_{-1}^{1} {\partial_s w}{\partial_y} ((1-y^2){\partial_{y} w})\,dy-\int_{-1}^{1} {\partial_s w}e^w\,dy\\+2\int_{-1}^{1} {\partial_s w}\,dy=-\int_{-1}^{1}  ({\partial_s w})^2\,dy+2  \int_{-1}^{1} y{\partial_s w} \partial^2_{y,s}
 w\, dy.
\end{eqnarray*}
Thus,
\begin{eqnarray*}
\frac{d}{ds} \left(  \int_{-1}^{1} \frac{1}{2}({\partial_s w})^2+ 2
w -e^w \, dy \right) +I_1 = -\int_{-1}^{1}({\partial_s w})^2\,
dy+I_2,
\end{eqnarray*}
where,
\begin{eqnarray*}
 I_1&=&-\int_{-1}^{1} \partial_y \left( (1-y^2)
\partial_y w \right) \partial_s w \, dy
=\int_{-1}^{1} \left( (1-y^2)\partial_y w \right)
\partial_{ys}^2 w  \, dy \\&=&\frac{1}{2} \frac{d}{ds}\left( \int_{-1}^{1}(1-y^2)({\partial_s w})^2 \, dy\right)
\end{eqnarray*}and
\begin{eqnarray*}
 I_2&=&-2  \int_{-1}^{1} y{\partial_s w} \partial^2_{y,s} w\,dy=-\int_{-1}^{1} y \partial_y (\partial_s w)^2\,dy\\&=&\int_{-1}^{1}(\partial_s w)^2\,dy-({\partial_s w(-1,s)})^2-(
{\partial_s w(1,s)})^2.
\end{eqnarray*}
Thus,
\begin{eqnarray*}
\frac{d}{ds} \left (\int_{-1}^{1} \left[\frac{1}{2}
 ({\partial_s
w})^2 +\frac{1}{2} (1-y^2)( {\partial_y w} )^2-e^w+2 w\right]\, dy\right
)=-({\partial_s w(-1,s)})^2-( {\partial_s w(1,s)})^2
\end{eqnarray*}\\
which yields the conclusion of Proposition \ref{2.1} by integration in
time.
\end{proof}

\medskip
\noindent{\bf Address:}\\
Universit\'e de Cergy-Pontoise, 
Laboratoire Analyse G\'eometrie Mod\'elisation, \\
  CNRS-UMR 8088, 2 avenue Adolphe Chauvin
95302, Cergy-Pontoise, France.
\\ \texttt{e-mail: asma.azaiez@u-cergy.fr}\\
Courant Institute, NYU, 251 Mercer Street, NY 10012, New York.
\\ \texttt{e-mail: masmoudi@cims.nyu.edu}\\
Universit\'e Paris 13, Institut Galil\'ee, Laboratoire Analyse G\'eometrie et Applications, \\
  CNRS-UMR 7539, 99 avenue J.B. Cl\'ement 93430, Villetaneuse, France.
\\ \texttt{e-mail: zaag@math.univ-paris13.fr}
\end{document}